\documentclass[a4paper,10pt]{article}
\setlength{\topmargin}{-10mm}
\setlength{\oddsidemargin}{5mm}
\setlength{\evensidemargin}{5mm}
\setlength{\textwidth}{150mm}
\setlength{\textheight}{230mm}

\usepackage{amsmath,amssymb,amsthm,graphicx, url}
\usepackage[all]{xypic}
\usepackage[symbol]{footmisc}
\DeclareMathOperator{\A}{\mathbb{A}}
\DeclareMathOperator{\Br}{Br}
\DeclareMathOperator{\cd}{cd}

\DeclareMathOperator{\Div}{Div}
\DeclareMathOperator{\End}{End}

\DeclareMathOperator{\F}{\mathbb{F}}
\DeclareMathOperator{\Gal}{Gal}
\DeclareMathOperator{\G}{\mathbb{G}}
\DeclareMathOperator{\Hom}{Hom}
\DeclareMathOperator{\Ker}{Ker}
\DeclareMathOperator{\cO}{\mathcal{O}}
\DeclareMathOperator{\cP}{\mathcal{P}}
\DeclareMathOperator{\Pic}{Pic}
\DeclareMathOperator{\Proj}{Proj}
\DeclareMathOperator{\Q}{\mathbb{Q}}
\DeclareMathOperator{\Spec}{Spec}
\DeclareMathOperator{\res}{res}
\DeclareMathOperator{\ur}{ur}
\DeclareMathOperator{\cV}{\mathcal{V}}

\DeclareMathOperator{\Z}{\mathbb{Z}}
\renewcommand{\bar}{\overline}
\renewcommand{\div}{\mathrm{div}}
\renewcommand{\cD}{\mathcal{D}}

\renewcommand{\injlim}{\varinjlim}
\renewcommand{\P}{\mathbb{P}}
\renewcommand{\sp}{\mathrm{sp}}
\renewcommand{\tilde}{\widetilde}
\makeatletter
    
    \@addtoreset{equation}{section}
\makeatother
\newcommand{\inj}{\hookrightarrow}
\newcommand\lsub[2]{{\vphantom{#2}}_{#1}\!#2}
\newtheorem{thm}{Theorem}[section]
\newtheorem{prop}[thm]{Proposition}
\newtheorem{lem}[thm]{Lemma}

\newtheorem{cor}[thm]{Corollary}

\theoremstyle{definition}

\newtheorem*{notation}{Notation}
\newtheorem*{ack}{Acknowledgments}
\newtheorem*{funding}{Funding}

\theoremstyle{remark}
\newtheorem{rem}[thm]{Remark}

\begin{document}
\title{On the Brauer group of diagonal cubic surfaces}
\author{\textsc{Tetsuya Uematsu}}
\date{}
\maketitle
\begin{abstract}
We are concerned with finding explicit generators of the Brauer group of
 diagonal cubic surfaces in terms of norm residue symbols, which was
 originally studied by Manin.

We introduce the notion of uniform generators and find that the
 Brauer group of some classes of diagonal cubic surfaces have uniform generators. However, we also prove that the Brauer
 group of general diagonal cubic surfaces do not have such ones. This reveals that a result of Manin for certain diagonal cubic
 surfaces cannot be generalized in some sense.
\end{abstract}
\section{Introduction}\label{sec:Introduction}
\footnote[0]{Submitted: Oct. 5, 2012; Revised: Feb. 24, 2013}
\footnote[0]{%
2010 Mathematics Subject Classification. 
Primary: 14F22, 19F15. 
Secondary: 11R34, 14J26. 
}
Let $k$ be a field of characteristic zero and containing a fixed primitive
cubic root $\zeta$ of unity. In this paper, we study the
cohomological Brauer group of diagonal cubic surfaces $V$ over $k$, that
is, smooth projective surfaces defined by a homogeneous equation of the form
\[
 x^3+by^3+cz^3+dt^3=0,
\]
where $b,c,d \in k^{\ast}$. In particular, we are concerned with the
following two natural problems:
\begin{itemize}
\item[(1)] Determine the structure of $\Br(V)$ as an abelian group.
\item[(2)] Find generators of $\Br(V)$ in terms of norm
residue symbols.
\end{itemize}
In general, the Brauer group of a variety plays an important role in
studying its arithmetic and its geometry. For applications to the Hasse
principle, see for example, \cite{manin1971groupe} and
\cite{skorobogatov2001}. It is also used for studying zero-cycles
(\cite{lichtenbaum1969duality} and \cite{colliot1995arithmetique}). For
applications to the rationality problem, see \cite{artin1972some}. For such
studies, we want to know in advance the structure and generators of its
Brauer group.
 
For diagonal cubic surfaces, an original work in this direction was due
to Manin \cite{manin1986cubic}. He gave a complete answer to the above
two problems for diagonal cubic surfaces of the form
$x^3+y^3+z^3+dt^3=0$ for $d \in k^{\ast}\setminus (k^{\ast})^3$. Let
$\pi\colon V \to \Spec k$ be the structure morphism and put
$\Br(V)/\Br(k):=\Br(V)/\pi^{\ast}\Br(k)$. In this case, $\Br(V)/\Br(k)
\cong (\Z/3\Z)^2$ and
\begin{equation*}
\left\{d, \dfrac{x+\zeta y}{x+y}\right\}_3, 
\quad \left\{d, \dfrac{x+z}{x+y}\right\}_3
\end{equation*}
are its symbolic generators, where 
\begin{equation*}
 \{\cdot,\cdot\}_3\colon K_2^M(k(V)) \to
  H^2(k(V),\mu_3^{\otimes 2}) \cong H^2(k(V), \mu_3) \inj \Br(k(V)),
\end{equation*}
is a norm residue symbol map. As an application of these symbolic
generators, Saito and Sato \cite{saito2009zerocycle} recently computed
the degree-zero part of the Chow group of zero-cycles on such cubic
surfaces over $p$-adic fields explicitly, even in the case $p=3$.

In this paper, we study these problems in a more general setting where the
equation of $V$ is of the forms $x^3+y^3+cz^3+dt^3=0$ and
$x^3+by^3+cz^3+dt^3=0$. 

First, we prove the following theorem, which
gives an answer to the problems (1) and (2) for the case
$x^3+y^3+cz^3+dt^3=0$.
\begin{thm}\label{thm:1st}
 Let $k$ be as above and $V$ be the cubic
 surface over $k$ defined by an equation $x^3+y^3+cz^3+dt^3=0,$ where
 $c$ and $d \in k^{\ast}.$ Assume that $c,$ $d,$ $cd$
 and $d/c$ are not contained in $(k^{\ast})^3.$ Then we have the following:

{\rm(1)} The group $\Br(V)/\Br(k)$ is isomorphic to $\Z/3\Z.$

{\rm(2)} The element
\[
 e_1=\left\{\dfrac{d}{c}, \dfrac{x+\zeta y}{x+y}\right\}_3 \in \Br(k(V))
\]
is contained in $\Br(V).$

{\rm(3)} The image of $e_1$ in $\Br(V)/\Br(k)$ is a generator of this
	   group$.$
\end{thm}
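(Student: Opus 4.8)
Throughout, write $K=k(\sqrt[3]{c},\sqrt[3]{d})$ and $G=\Gal(K/k)$. The hypotheses on $c,d$ are equivalent to $c$ and $d$ being linearly independent in $k^{\ast}/(k^{\ast})^{3}$, so $G\cong(\Z/3\Z)^{2}$ and $k(\sqrt[3]{d/c})$ is a cyclic cubic subextension of $K/k$. Over $\bar k$ one puts $V$ into Fermat form $X^{3}+Y^{3}+Z^{3}+W^{3}=0$ by $(X,Y,Z,W)=(x,y,\sqrt[3]{c}\,z,\sqrt[3]{d}\,t)$, and the classical list of the twenty-seven lines then shows that all of them are defined over $K$: nine are the lines $\{x+\zeta^{a}y=0,\ z+\zeta^{b}\sqrt[3]{d/c}\,t=0\}$ ($a,b\in\Z/3\Z$), defined over $k(\sqrt[3]{d/c})$, and the other eighteen, arising from the remaining two pairings of the variables, are defined over $K$. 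Hence the $\Gal(\bar k/k)$-action on $\Pic\bar V$ factors through $G$; since $\Pic\bar V$ is a finitely generated free abelian group on which $\Gal(\bar k/K)$ acts trivially we have $H^{1}(K,\Pic\bar V)=0$, and inflation--restriction gives $H^{1}(k,\Pic\bar V)=H^{1}(G,\Pic V_{K})$.

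The plan is as follows. The Hochschild--Serre spectral sequence for $V\to\Spec k$, together with $\Br\bar V=0$ (valid since $\bar V$ is rational), yields the exact sequence $0\to\Br(k)\to\Br(V)\to H^{1}(k,\Pic\bar V)\to H^{3}(k,\G_m)$, hence an injection $\Br(V)/\Br(k)\inj H^{1}(G,\Pic V_{K})$. I would then compute $H^{1}(G,\Pic V_{K})\cong\Z/3\Z$: the module $\Pic V_{K}$ is the usual rank-$7$ lattice on the twenty-seven lines, with $G$-action read off from the list above, and I would evaluate its first cohomology by embedding it into a permutation $G$-module (e.g.\ by a flasque resolution $0\to\Pic V_{K}\to P\to Q\to 0$, which kills $H^{1}(G,-)$ of $P$ and reduces the computation to $\mathrm{coker}(P^{G}\to Q^{G})$, a finite linear-algebra problem over $(\Z/3\Z)^{2}$); alternatively one may quote the classification of $H^{1}(k,\Pic\bar V)$ for diagonal cubic surfaces of Colliot-Th\'el\`ene--Kanevsky--Sansuc, in which the present hypotheses pick out exactly the case with group $\Z/3\Z$. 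Granting this, it then suffices to show that $e_{1}\in\Br(V)$ and that its image in $H^{1}(G,\Pic V_{K})$ is nonzero: indeed the image of $\Br(V)/\Br(k)$ in $H^{1}(G,\Pic V_{K})\cong\Z/3\Z$ is then a nonzero subgroup, hence everything, giving (1), while (2) and (3) are precisely the two statements just displayed.

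For $e_{1}\in\Br(V)$ (part (2)) I would compute the residue of $e_{1}$ at every codimension-one point of $V$. Since $x^{3}+y^{3}=(x+y)(x+\zeta y)(x+\zeta^{2}y)$, on $V$ the vanishing of $x+\zeta y$ forces $cz^{3}+dt^{3}=0$, so $D_{L}:=V\cap\{x+\zeta y=0\}$ is the reduced, $k$-irreducible curve whose geometric components are the three lines $\{x+\zeta y=0,\ z+\zeta^{j}\sqrt[3]{d/c}\,t=0\}$, and similarly $D_{M}:=V\cap\{x+y=0\}$; as a divisor on $V$ this gives $\div_{V}(g)=D_{L}-D_{M}$. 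Because $d/c$ is a nonzero constant it has valuation zero along every prime divisor of $V$, so the residue of $e_{1}=\{d/c,g\}_{3}$ along a prime divisor $D$ is the class of $(d/c)^{\val_{D}(g)}$ in $k(D)^{\ast}/(k(D)^{\ast})^{3}$, hence $0$ unless $D\in\{D_{L},D_{M}\}$. But the function field of each of $D_{L}$ and $D_{M}$ contains $\sqrt[3]{d/c}$ --- it occurs in the equation of any geometric component --- so $d/c$ is a cube there and the residue vanishes. Thus $e_{1}$ is unramified at every codimension-one point of the regular surface $V$, and by purity $e_{1}\in\Br(V)$.

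For the last point --- that the image of $e_{1}$ in $H^{1}(G,\Pic V_{K})$ is nonzero --- it does not help to evaluate $e_{1}$ at rational points, since when $\Br(k)=0$ all such values vanish while $e_{1}$ can still be nontrivial in $\Br(V)$; one has to trace $e_{1}$ through the connecting map above. Writing $e_{1}=\pi^{\ast}\chi\cup(g)$ with $\chi\in H^{1}(k,\mu_{3})$ the class of $d/c$ and $(g)$ the Kummer class of $g$, and applying the standard recipe expressing the image of an algebraic Brauer class in $H^{1}(k,\Pic\bar V)$ in terms of $\chi$ and of $\div_{\bar V}(g)=\sum_{j}L_{j}-\sum_{j}M_{j}$, one identifies this image with the class of an explicit $1$-cocycle $G\to\Pic V_{K}$ built from $L_{0}-M_{0}$ and the permutation action of $G$ on the $L_{j}$ and $M_{j}$, and one checks directly that it is not a coboundary; as $H^{1}(G,\Pic V_{K})\cong\Z/3\Z$ it is then automatically a generator. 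I expect the Galois-cohomological part --- pinning down $H^{1}(G,\Pic V_{K})$ and then locating the class of $e_{1}$ inside it --- to be the main obstacle; the residue computation yielding (2) is comparatively routine.
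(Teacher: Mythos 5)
Your proposal is correct, and it diverges from the paper's argument in a couple of genuine ways, so a comparison is worthwhile. The main difference is in part (2): you prove $e_1\in\Br(V)$ by purity, computing the tame residue of $\{d/c,(x+\zeta y)/(x+y)\}_3$ at every codimension-one point; since $d/c$ is a global unit the only candidates are the two $k$-irreducible hyperplane-section curves $D_L$ and $D_M$, and there the residue $(d/c)^{\pm 1}$ dies because $\sqrt[3]{d/c}$ lies in the function field of each. The paper never takes a residue: it works with the cyclic cubic extension $k'=k(\sqrt[3]{d/c})$, identifies $\Br(V_{k'}/V)$ with $\ker\bigl(H^2(\Gal(k'/k),k'(V)^{\ast})\to H^2(\Gal(k'/k),\Div(V_{k'}))\bigr)$ (Proposition \ref{prop:another_desc_Brauer_group}), and matches the cocycle of the symbol with the image under the connecting map of the explicit generating cocycle $\phi'$ of $H^1(\Gal(k'/k),\Pic(V_{k'}))$ from Proposition \ref{prop:explicit_cocycle}. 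That one computation buys (2) and (3) simultaneously, whereas your purity argument settles (2) in isolation and you still owe the ``standard recipe'' computation for (3) --- which, as you say, reduces to checking that the cocycle attached to $L(1)-L(0)$ is not a coboundary; this is precisely the finite linear-algebra verification carried out in Proposition \ref{prop:explicit_cocycle}(1) via $\hat{H}^{-1}$ of the cyclic group, and it does need to be done, since it is the entire content of (3). For (1) the paper uses the $k$-point $(1:-1:0:0)$ and Lemma \ref{lem:isom_condition} to get surjectivity onto $H^1(k,\Pic(\bar V))$, while you extract surjectivity from the explicit nonzero class; both work, and yours has the small merit of not invoking the rational point. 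Finally, you set everything over the degree-$9$ field $K=k(\sqrt[3]{c},\sqrt[3]{d})$; since the symbol is split by the cubic subfield $k(\sqrt[3]{d/c})$, descending to that cyclic extension, as the paper does, makes the cocycle computations one-variable Tate cohomology and is the cleaner place to carry out your final non-coboundary check.
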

The claim (1) is essentially due to
\cite{colliot1987arithmetique}. Recently Colliot-Th\'el\`ene and
Wittenberg found a symbolic generator of $\Br(V)/\Br(k)$ for
$V:x^3+y^3+2z^3=at^3$ when $k$ does not contain a primitive cubic root
of unity (\cite{colliot2012groupe}, Proposition 2.1). In this case, our symbolic
generator was also appeared in the proof of this proposition.

We note that in the result of Manin and Theorem \ref{thm:1st}, we can take a
generator {\it uniformly}. More precisely, let $c$ and $d$ be
indeterminates, $F=k(c,d)$, $V$ be the
cubic surface $x^3+y^3+cz^3+dt^3=0$ over $F$, and
\begin{equation*}
e(c,d)=\left\{\dfrac{d}{c},
 \dfrac{x+\zeta y}{x+y}\right\}_3
\end{equation*}
be an element in $\Br(V)$. Let $P=(c_0,d_0)$ be a point in
 $k^{\ast}\times k^{\ast}$ with $c_0$, $d_0$, $c_0d_0$ and $d_0/c_0$ not
 contained in $(k^{\ast})^3$, and $V_P$ the surface defined by
 $x^3+y^3+c_0z^3+d_0t^3=0$. If we want a symbolic generator of
 $\Br(V_P)/\Br(k)$, we can get it by specializing $e(c,d)$ at $P$. We
 denote this element by $\sp(e(c,d);P)$. A precise definition of the
 specialization will be given in \S \ref{sec:Brauer_group}. In general, it is not
 necessary that the Brauer group of a given variety has such uniform
 generators. 

Concerning the problem whether symbolic generators can be chosen
uniformly or not, we prove a non-existence result as stated below. Let
$F=k\left(b,c,d\right)$, where $b$, $c$, $d$ are indeterminates over
$k$, and let $V$ be the projective cubic surface over $F$ defined by the
equation $x^3+by^3+cz^3+dt^3=0$. For $P=(b_0,c_0,d_0)\in k^{\ast}\times
k^{\ast}\times k^{\ast}$, let $V_P$ be the projective cubic surface over
$k$ defined by the equation $x^3+b_0y^3+c_0z^3+d_0t^3=0$. For $e \in
\Br(V)$, we will define its specialization at $P$, $\sp(e;P) \in
\Br(V_P)$. Put
\begin{equation*}
\cP_k=\{P \in (\G_{m,k})^3 \mid \Br(V_P)/\Br(k) \cong \Z/3\Z.\}.
\end{equation*} 
Note (\cite{colliot1987arithmetique}) that $\Br(V_P)/\Br(k)$ isomorphic to
either of $0$, $\Z/3\Z$ and $(\Z/3\Z)^2$ and that Manin
dealt with the last case, as stated before.

We first prove the following:
\begin{thm}\label{thm:2nd}
Let $k, F$ and $V$ be as above. Then
\[
 \Br(V)/\Br(F)=0.
\]
\end{thm}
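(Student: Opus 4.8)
\emph{Proof proposal.} The plan is to reduce the statement to a vanishing in Galois cohomology and then to carry that vanishing out from the explicit equations of the twenty-seven lines, the genericity of $b,c,d$ entering through the $G$-module structure of $\Pic(\bar V)$. Since $V$ is a smooth projective geometrically rational surface over $F$ we have $\Br(\bar V)=0$, and the low-degree terms of the Hochschild--Serre spectral sequence for $\pi\colon V\to\Spec F$ give an exact sequence
\[
 \Br(F)\xrightarrow{\ \pi^{\ast}\ }\Br(V)\longrightarrow H^{1}\bigl(\Gal(\bar F/F),\Pic(\bar V)\bigr)\longrightarrow H^{3}(F,\G_{m}).
\]
Hence it suffices to prove $H^{1}(\Gal(\bar F/F),\Pic(\bar V))=0$; no hypothesis on rational points or on $\cd(F)$ is needed, since only the injectivity of $\Br(V)/\pi^{\ast}\Br(F)\inj H^{1}(\Gal(\bar F/F),\Pic(\bar V))$ is used.

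To identify the Galois module, partition $\{x,y,z,t\}$ into two pairs in the three possible ways; each partition contributes nine of the twenty-seven lines, those attached to $\{x,y\}\sqcup\{z,t\}$ being
\[
 x+\zeta^{i}b^{1/3}y=0=z+\zeta^{j}(d/c)^{1/3}t\qquad(i,j\in\Z/3\Z),
\]
and similarly for the other two. Thus every line is rational over $L:=F(b^{1/3},c^{1/3},d^{1/3})$, and since $b,c,d$ are algebraically independent over $k\ni\zeta$ their classes are independent in $F^{\ast}/(F^{\ast})^{3}$, so $L/F$ is Galois with $G:=\Gal(L/F)\cong(\Z/3\Z)^{3}$. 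The action of $\Gal(\bar F/F)$ on $\Pic(\bar V)$ — which is generated over $\Z$ by the twenty-seven line classes — factors through $G$, and inflation--restriction gives $H^{1}(\Gal(\bar F/F),\Pic(\bar V))\cong H^{1}(G,\Pic(\bar V))$. Reading the action off the displayed equations, the lines form exactly three $G$-orbits of size nine, the orbit attached to a partition being isomorphic to $G/H$ for an order-$3$ subgroup $H$; the three partitions give three \emph{pairwise distinct} order-$3$ subgroups $H_{1},H_{2},H_{3}\subset G$, and this distinctness is exactly the shadow of the genericity of $b,c,d$: when some monomial $b^{i}c^{j}d^{k}$ becomes a cube in $F$, $G$ shrinks and two of the $H_{\nu}$ coincide.

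It remains to prove $H^{1}(G,\Pic(\bar V))=0$. Let $\Z^{27}\cong\bigoplus_{\nu=1}^{3}\Z[G/H_{\nu}]$ be the permutation $G$-module on the lines; the surjection $\Z^{27}\twoheadrightarrow\Pic(\bar V)$ gives a short exact sequence $0\to N\to\Z^{27}\to\Pic(\bar V)\to0$, with $N$ the lattice of relations among the line classes. Inside each partition the nine lines split into three tritangent planes (e.g. $\{x+\zeta^{i}b^{1/3}y=0\}\cap V$ is the triangle $cz^{3}+dt^{3}=0$), so each orbit-sum equals $-3K_{V}$ in $\Pic(\bar V)$; hence $\Pic(\bar V)^{G}$ has rank $1$ and, being saturated and containing the primitive class $K_{V}$, equals $\Z\cdot K_{V}$. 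Dualizing and using that the intersection form makes $\Pic(\bar V)$ a self-dual $G$-module, from $0\to\Pic(\bar V)\to\Z^{27}\to N^{\vee}\to0$ and $H^{1}(G,\Z^{27})=\bigoplus_{\nu}H^{1}(H_{\nu},\Z)=0$ (Shapiro) one gets
\[
 H^{1}(G,\Pic(\bar V))\;\cong\;(N^{\vee})^{G}\big/\operatorname{im}\bigl((\Z^{27})^{G}\bigr),
\]
with $(\Z^{27})^{G}\cong\Z^{3}$ the vectors constant on orbits. One then checks, using the explicit tritangent-plane relations among the twenty-seven classes together with the distinctness of $H_{1},H_{2},H_{3}$, that every $G$-invariant of $N^{\vee}$ already comes from $(\Z^{27})^{G}$, so this quotient vanishes. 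Alternatively, each partition provides a conic-bundle fibration $\bar V\to\P^{1}$ defined over the degree-$9$ subfield of $L$ fixing $H_{\nu}$, and one applies the Colliot-Th\'el\`ene--Sansuc description of the Picard module of a conic-bundle surface to all three fibrations; they ``fit together'' with no residual $\Z/3\Z$ exactly because the $H_{\nu}$ are distinct.

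The only genuine work is this last step — assembling the relation lattice $N$ (equivalently, handling the three conic-bundle fibrations) and verifying that no invariant survives in $N^{\vee}$; everything before it is formal. I expect this computation to also explain Theorem~\ref{thm:1st} and Manin's theorem uniformly: as the number of independent cube roots needed to split the lines drops from $3$ to $2$ to $1$, first two and then all three of the $H_{\nu}$ collapse, and $H^{1}(G,\Pic(\bar V))$ grows accordingly from $0$ to $\Z/3\Z$ to $(\Z/3\Z)^{2}$; in particular a single class over $F$ specializing to a generator on every special fibre cannot exist, since it would lie in $H^{1}(G,\Pic(\bar V))=0$.
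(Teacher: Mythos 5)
Your proposal rests on the claim that $H^{1}(\Gal(\bar F/F),\Pic(\bar V))=0$, and that claim is false. For $b,c,d$ indeterminates none of $\nu$, $\nu/\lambda$, $\nu/\mu$ is a cube in $F^{\ast}$ and none of the six quantities $\lambda,\mu,\lambda/\mu,\lambda\mu\nu,\lambda\nu,\mu\nu$ is a cube either, so by Proposition~\ref{prop:ctks} (Colliot-Th\'el\`ene--Kanevsky--Sansuc) one is in the ``otherwise'' case and $H^{1}(F,\Pic(\bar V))\cong\Z/3\Z$. The paper says this explicitly right after stating the theorem: the vanishing of $\Br(V)/\Br(F)$ does \emph{not} follow from the Hochschild--Serre sequence because $E_{2}^{1,1}\neq 0$, $V(F)=\emptyset$ and $\cd(F)\geq 3$. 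The lattice computation you defer to the phrase ``one then checks'' is exactly where the error is hidden: if you carried it out (it is the computation done in \cite{colliot1987arithmetique}), you would find a surviving $\Z/3\Z$ generated by the class of the cocycle $\phi$ of Proposition~\ref{prop:explicit_cocycle}. Your closing heuristic --- that $H^{1}$ grows from $0$ to $\Z/3\Z$ to $(\Z/3\Z)^{2}$ as the splitting field shrinks --- is likewise wrong at the first step, and it would in any case prove too much: it would make Theorem~\ref{thm:2nd} a formal consequence of Hochschild--Serre, whereas the whole point of the paper is that it is not.

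The actual content of the theorem is that the nonzero class $[\phi]\in H^{1}(F,\Pic(\bar V))\cong\Z/3\Z$ has nonzero image under the differential $d^{1,1}\colon H^{1}(F,\Pic(\bar V))\to H^{3}(F,\bar F^{\ast})$, so that the injection $\Br(V)/\Br(F)\inj\Ker(d^{1,1})$ forces the left-hand side to vanish. This is where $\cd(F)\geq 3$ is essential (over a field of cohomological dimension $\leq 2$ the target would be zero and $\Br(V)/\Br(F)$ would be all of $\Z/3\Z$), and it is why your remark that ``no hypothesis on rational points or on $\cd(F)$ is needed'' points in exactly the wrong direction. The paper's proof computes $d^{1,1}[\phi]=\delta\partial[\phi]$ as an explicit $3$-cocycle with values in $F'^{\ast}$ using the resolution of $\Pic(V_{F'})$ by the divisor lattices $\cD_{0}\subset\cD$, lifts it to a class in $H^{3}(F''/F,\mu_{3})$, and then proves nonvanishing in $H^{3}(F,\bar F^{\ast})$ by taking the residue along $D=\{\mu=0\}$ to land in $H^{2}(k(D),\Z/3\Z)$ and then a second residue along $D'=\{\nu=0\}$ to land in a nonzero class of $H^{1}(k(D'),\Z/3\Z)$. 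None of that machinery appears in your proposal, and no reduction to a purely $H^{1}(G,\Pic)$ statement can work.
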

Note that this vanishingness does not follow directly from a seven-term
exact sequence induced by the Hochschild-Serre spectral sequence
\[
 E_2^{p,q}=H^p(F,H^q(\bar{V},\G_m)) \Rightarrow H^{p+q}(V,\G_m) 
\]
since $E_2^{1,1} \neq 0$, $V(F)=\emptyset$ and $\cd(F) \geq 3$. As far
as we know, this would be the first example of computation of Brauer
groups for such varieties.

As a corollary of Theorem \ref{thm:2nd}, we can obtain the following
non-existence result:
\begin{cor}\label{cor:2nd}
Let $k$, $F$ and $V$ as in Theorem \ref{thm:2nd}. Assume moreover
 $\dim_{\F_3} k^{\ast}/(k^{\ast})^3 \geq 2.$ Then there is no element $e
 \in \Br(V)$ satisfying the following condition$:$
 \begin{quote} 
  there exists a dense open subset $W \subset (\G_{m,k})^3$ such that
 $\sp(e;\cdot)$ is defined on $W(k) \cap \cP_k$ and for all $P \in W(k)
 \cap \cP_k,$ $\sp(e;P)$ is a generator of $\Br(V_P)/\Br(k).$
 \end{quote}
\end{cor}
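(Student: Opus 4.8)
The plan is to argue by contradiction, the mechanism being that Theorem~\ref{thm:2nd} forces every class in $\Br(V)$ to specialize into the image of $\Br(k)$, whereas the hypothesis $\dim_{\F_3}k^{\ast}/(k^{\ast})^3\ge 2$ produces a Zariski dense family of points $P$ with $\Br(V_P)/\Br(k)\cong\Z/3\Z$; these two facts cannot both hold for a single $e$ as in the statement.

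So suppose such an $e$ and a dense open $W\subset(\G_{m,k})^3$ exist. First I would apply Theorem~\ref{thm:2nd}: since $\Br(V)=\pi^{\ast}\Br(F)$, we may write $e=\pi^{\ast}f$ for some $f\in\Br(F)=\Br(k(b,c,d))$. Next, shrink to a dense open $U\subset(\G_{m,k})^3$ over which $f$ extends to a class $\tilde f\in\Br(U)$ and over which $V$, together with $e$, extends to a regular model $\cV\to U$. From the construction of the specialization map in \S\ref{sec:Brauer_group}, for every $P\in U(k)$ one then gets $\sp(e;P)=(\pi_{\cV}^{\ast}\tilde f)\big|_{\cV_P}=\pi_P^{\ast}\bigl(\tilde f|_P\bigr)=\pi_P^{\ast}\bigl(\sp(f;P)\bigr)$ by functoriality of pullback along the cartesian square with vertical maps $\cV_P\to\cV$ and $\Spec k\to U$. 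Hence, on the dense open $W':=W\cap U$, the image of $\sp(e;P)$ in $\Br(V_P)/\Br(k)$ is $0$ for every $P\in W'(k)$.

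It remains to find one point $P\in W'(k)\cap\cP_k$; this is where the hypothesis on $k$ is used, and it is the step I expect to need the most care. Choose $\gamma,\delta\in k^{\ast}$ whose images in $k^{\ast}/(k^{\ast})^3$ are $\F_3$-linearly independent; then $\gamma$, $\delta$, $\gamma\delta$ and $\delta/\gamma$ all have nontrivial image, so Theorem~\ref{thm:1st}(1) applied to $x^3+y^3+\gamma z^3+\delta t^3=0$ gives $\Br(V_{P_0})/\Br(k)\cong\Z/3\Z$ for $P_0=(1,\gamma,\delta)$. Rescaling the coordinates $y,z,t$ shows that the $k$-isomorphism class of $V_P$, hence the isomorphism class of $\Br(V_P)/\Br(k)$, depends only on the classes of the three coefficients modulo $(k^{\ast})^3$; therefore the whole subset $S:=(k^{\ast})^3\times\gamma(k^{\ast})^3\times\delta(k^{\ast})^3$ is contained in $\cP_k$. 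Since $k$ has characteristic $0$ it is infinite, so $(k^{\ast})^3$ is an infinite, hence Zariski dense, subset of the irreducible curve $\G_{m,k}$, and therefore $S$ is Zariski dense in $(\G_{m,k})^3$ and meets the dense open $W'$. Fixing any $P\in S\cap W'\subset W'(k)\cap\cP_k$, the defining property of $e$ forces $\sp(e;P)$ to generate $\Br(V_P)/\Br(k)\cong\Z/3\Z$, while the second paragraph gives $\sp(e;P)=0$ there --- a contradiction. This proves the corollary.

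In short, the crucial input is the vanishing in Theorem~\ref{thm:2nd}; the identity $\sp(\pi^{\ast}f;\cdot)=\pi^{\ast}\sp(f;\cdot)$ is formal once $\sp$ is set up as in \S\ref{sec:Brauer_group}; and the only real verification is the non-emptiness of $W'(k)\cap\cP_k$, which relies on the invariance of $\Br(V_P)/\Br(k)$ under multiplying coefficients by cubes, on Theorem~\ref{thm:1st} as a supply of examples, and on the Zariski density of $(k^{\ast})^3$ in $\G_{m,k}$.
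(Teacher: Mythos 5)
Your proposal is correct and follows essentially the same route as the paper: use Theorem~\ref{thm:2nd} to write $e=\pi^{\ast}f$ with $f\in\Br(F)$, spread $f$ out over a non-empty open $S$, conclude by functoriality that $\sp(e;P)\in\pi_P^{\ast}\Br(k)$ for $P\in S(k)$, and obtain a contradiction at a point of $(W\cap S)(k)\cap\cP_k$, whose existence the paper gets from the Zariski density of $\cP_k$ in Proposition~\ref{prop:C(k)}. Your inline density argument (Theorem~\ref{thm:1st}(1) together with invariance of $V_P$ under multiplying coefficients by cubes, plus density of a product of infinite subsets) is just a repackaging of the paper's proof of that proposition via Lemma~\ref{lem:density}.
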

We note that the assumption $\dim_{\F_3} k^{\ast}/(k^{\ast})^3 \geq 2$
is equivalent to the Zariski density of $\cP_k \subset (\G_{m,k})^3$, which is
essentially necessary to prove Theorem \ref{thm:2nd}. We easily see that
this assumption holds for various fields, for example, all finitely
generated fields over $\Q(\zeta)$ and $\Q_p(\zeta)$ for any prime number
$p$, and hence this is a mild assumption.

 This paper is written in the following fashion. In \S
\ref{sec:Brauer_group}, we describe the Brauer group of varieties in
terms of Galois cohomology. We also define specialization of Brauer
groups. In \S \ref{sec:diagonal_cubic_surface} we focus on diagonal
cubic surfaces, especially their Picard groups and Galois action on
them. In \S \ref{sec:11cd},
we give a proof of Theorem \ref{thm:1st}. Finally, in \S \ref{sec:1bcd}, we
prove Theorem \ref{thm:2nd} by computing the image under a differential
$d^{1,1}$ appearing in the above spectral sequence explicitly. We also 
discuss the condition $\dim_{\F_3} k^{\ast}/(k^{\ast})^3 \geq 2$
appearing in Corollary \ref{cor:2nd}. Finally, we prove Corollary \ref{cor:2nd}. 

\begin{notation}
For a group $A$ and $f \in \End(A)$, we denote by $\lsub{f}{A}$ the
kernel of $f$.

Throughout this paper, all fields are of characteristic zero. For a
field $k$, we denote a separable closure of $k$ by $\bar{k}$. We fix
such a field $\bar{k}$ and each algebraic separable extension of $k$ is
always considered as a subfield of this $\bar{k}$. If $k$ is a discrete
valuation field, the field $k^{\ur}$ denotes the maximal unramified
extension of $k$.

Fix a positive integer $n$ and assume that $k$ contains a primitive
$n$-th root $\zeta_n $ of unity. For $f$ and $g \in k^{\ast}$, we denote
by $\{f,g\}_n$ the image of $f \otimes g$ under a usual norm residue
symbol map
\begin{equation*}
 k^{\ast} \otimes k^{\ast} \to H^1(k,\mu_n) \otimes H^1(k, \mu_n)
  \stackrel{\cup}{\to} H^2(k,\mu_n^{\otimes 2}) \cong H^2(k,\mu_n) \cong \lsub{n}{\Br(k)},
\end{equation*}
where the third map is induced by $\zeta_n^i \otimes \zeta_n^j \mapsto
\zeta_n^{ij}$.

For a scheme $V$, all cohomology groups of $V$ mean \'etale cohomology.
\end{notation}
\section{Preliminaries of Brauer groups}\label{sec:Brauer_group}
Let $k$ be a field and $\pi\colon V \to \Spec k$ a variety over $k$. In
this section, we see two descriptions of $\Br(V)$ in terms of Galois
cohomology of $k$. We also introduce the notion of specialization of
Brauer groups.

First, we recall a fundamental exact sequence. By
the Hochschild-Serre spectral sequence
\begin{equation*}
 H^p(k,H^q(\bar{V},\G_m)) \Rightarrow H^{p+q}(V,\G_m),
\end{equation*}
we have the following exact sequence
\begin{equation}\label{eq:fund_exact_seq}
 0 \to \Br_1(V)/\Br(k) \to H^1(k,\Pic(\bar{V}))
 \stackrel{d^{1,1}}{\to} H^3(k,\bar{k}^{\ast}),
\end{equation}
where
\begin{equation*}
\Br_1(V):=\Ker(\Br(V) \to \Br(\bar{V})),\quad
\Br_1(V)/\Br(k):=\Br_1(V)/\pi^{\ast}\Br(k).
\end{equation*}
By this sequence, we know that $\Br_1(V)/\Br(k)$ has an inclusion into
$H^1(k,\Pic(\bar{V}))$. It is not clear whether this inclusion is an isomorphism or not. However, here are the following sufficient
conditions:
\begin{lem}\label{lem:isom_condition}
Let $V$ be a variety over a field $k$. If $\cd(k) \leq 2$ or $V(k) \neq \emptyset,$ then
\begin{equation*}
\Br_1(V)/\Br(k) \cong H^1(k,\Pic(\bar{V})).
\end{equation*}
\end{lem}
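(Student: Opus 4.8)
The plan is to read off both sufficient conditions from the seven-term exact sequence attached to the Hochschild--Serre spectral sequence, exhibiting in each case a reason why the differential $d^{1,1}\colon H^1(k,\Pic(\bar V)) \to H^3(k,\bar k^{\ast})$ vanishes, so that the inclusion in \eqref{eq:fund_exact_seq} becomes an isomorphism. First I would dispose of the case $\cd(k) \le 2$: here $H^3(k, \bar k^{\ast}) = 0$ because $\bar k^{\ast}$ is a torsion-free-mod-divisible \dots more precisely, one uses that $H^i(k, \bar k^{\ast}) = 0$ for $i \ge 3$ when $\cd(k) \le 2$. Concretely, from the exact sequence $0 \to \bar k^{\ast} \xrightarrow{n} \bar k^{\ast} \to \mu_n \to 0$ (Kummer theory, char zero) one gets $H^3(k,\mu_n) \twoheadrightarrow \lsub{n}{H^3(k,\bar k^{\ast})}$ preceded by $H^2(k,\bar k^{\ast})/n \to H^3(k,\mu_n)$; since $\cd(k) \le 2$ forces $H^3(k,\mu_n) = 0$ for all $n$, and $H^3(k,\bar k^{\ast})$ is torsion (being a colimit of $H^3$ of finite groups, or directly because $\bar k^{\ast}$ is divisible with torsion cokernel of multiplication), we conclude $H^3(k,\bar k^{\ast}) = 0$. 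Hence $d^{1,1}$ is the zero map and \eqref{eq:fund_exact_seq} gives the isomorphism.

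For the second case, $V(k) \ne \emptyset$, I would use the standard splitting argument: a rational point $x \in V(k)$ gives a section $\Spec k \to V$ of $\pi$, hence a retraction $\Br(V) \to \Br(k)$ of $\pi^{\ast}$, and more generally retractions $H^{p+q}(V,\G_m) \to H^{p+q}(k,\G_m)$ compatible with the spectral sequence. The cleanest way to phrase it: the section $x$ induces a morphism of Hochschild--Serre spectral sequences from that of $V$ to that of $\Spec k$ (whose abutment is $H^{\bullet}(k,\G_m)$ and which is concentrated on the $q=0$ line), splitting $\pi^{\ast}$ on each page. In particular the edge differential $d^{1,1}$ for $V$ factors through the corresponding (zero) differential for $\Spec k$ and so vanishes. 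Again \eqref{eq:fund_exact_seq} then yields $\Br_1(V)/\Br(k) \cong H^1(k,\Pic(\bar V))$.

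The main obstacle — such as it is — is bookkeeping rather than depth: making sure the identification of the target of $d^{1,1}$ with $H^3(k,\bar k^{\ast})$ is the one for which the rational-point retraction argument applies verbatim, and being careful that in the $V(k) \ne \emptyset$ case one only needs the retraction on the $E_2$-terms $E_2^{p,0} = H^p(k,\G_m)$ to kill $d_2^{1,1}\colon E_2^{1,1} \to E_2^{3,0}$, which is immediate because $d_2$ is natural and the retracted differential lives in the spectral sequence of $\Spec k$ where the entire $E_2$-page is supported in $q=0$, forcing all differentials out of $E_2^{1,1}$ to be zero after applying the retraction — hence zero, since the retraction of $\pi^\ast$ is a left inverse. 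I do not expect either verification to require a genuinely new idea; both are classical facts (cf.\ \cite{skorobogatov2001}, \S4), and I would present them in a couple of lines each, citing the vanishing $\cd(k) \le 2 \Rightarrow H^{\ge 3}(k,\bar k^{\ast}) = 0$ and the rational-point splitting of Hochschild--Serre as standard.
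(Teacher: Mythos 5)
Your proof is correct and follows essentially the same route as the paper: for $\cd(k)\leq 2$ one kills the target $H^3(k,\bar k^{\ast})$ of $d^{1,1}$ (the paper asserts this vanishing, which your Kummer-sequence-plus-torsion argument justifies), and for $V(k)\neq\emptyset$ one uses the section of $\pi$ to split the spectral sequence, which is just the paper's observation that $H^3(k,\bar k^{\ast})\to H^3(V,\G_m)$ is then injective, forcing $d^{1,1}=0$. No gap; your write-up is merely a more detailed version of the paper's one-line proof.
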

\begin{proof}
The first (resp.\ the second) assumption implies
 $H^3(k,\bar{k}^{\ast})=0$ (resp.\ $H^3(k,\bar{k}^{\ast}) \to
 H^3(V,\G_m)$
is injective), which implies the surjectivity of $\Br_1(V)/\Br(k) \to
 H^1(k,\Pic(\bar{V}))$.
\end{proof}

Secondly, we give another description of Brauer group of varieties. 
We use the following result in \S \ref{sec:11cd}. We describe the following
Brauer group
\begin{equation*}
\Br(V_L/V):=\Ker(\Br(V) \to \Br(V_L)),
\end{equation*}
where $L/k$ is a Galois extension. The claim is:
\begin{prop}\label{prop:another_desc_Brauer_group}
Let $V$ be a smooth$,$ geometrically integral variety over a field $k$ and let $L$ be a Galois extension of $k.$ Put
 $G:=\Gal(L/k).$ Then we have an exact sequence
\begin{equation*}
0 \to \Br(V_L/V) \to H^2(G,L(V)^{\ast}) \stackrel{\div}{\to} H^2(G,\Div(V_L)),
\end{equation*}
where $\div$ is naturally induced by
\begin{equation*}
 \div\colon L(V)^{\ast} \to \Div(V_L);\quad f \mapsto \div(f).
\end{equation*}
\end{prop}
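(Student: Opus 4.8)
The plan is to realise $\Br(V_L/V)$ as a subgroup of $\Br(k(V))$, to identify $H^2(G,L(V)^{\ast})$ with another subgroup of $\Br(k(V))$ via crossed products, and then to match the two using the local structure of $V_L\to V$ along codimension-one points.

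First, since $V$ is smooth and geometrically integral, $V_L$ is a regular integral scheme with function field $L(V)=L\otimes_k k(V)$, which is a Galois extension of $k(V)$ with group $G$. For a regular integral scheme the restriction to the generic point is injective on Brauer groups (Grothendieck), so $\Br(V)\inj\Br(k(V))$ and $\Br(V_L)\inj\Br(L(V))$; chasing the commutative square of restriction maps then yields
\begin{equation*}
\Br(V_L/V)=\Br(V)\cap\Br(L(V)/k(V))\subseteq\Br(k(V)),
\end{equation*}
where $\Br(L(V)/k(V)):=\Ker(\Br(k(V))\to\Br(L(V)))$. By the classical isomorphism $\Br(L(V)/k(V))\cong H^2(G,L(V)^{\ast})$ coming from crossed products, it remains to prove that the preimage of $\Br(V)$ in $H^2(G,L(V)^{\ast})$ is precisely $\Ker\!\bigl(H^2(G,L(V)^{\ast})\xrightarrow{\div}H^2(G,\Div(V_L))\bigr)$.

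Write $V^{(1)}$ for the set of codimension-one points of $V$. Since $V_L\to V$ is a finite (in general pro-)\'etale $G$-torsor, $G$ acts transitively on the codimension-one points of $V_L$ lying over a given $x\in V^{(1)}$, with stabiliser the decomposition group $G_x$; hence $\Div(V_L)\cong\bigoplus_{x\in V^{(1)}}\Z[G/G_x]$ as $G$-modules, and Shapiro's lemma gives $H^2(G,\Div(V_L))\cong\bigoplus_{x\in V^{(1)}}H^2(G_x,\Z)$ (when $L/k$ is infinite one first passes to the colimit over its finite subextensions). Fix $x\in V^{(1)}$ and let $B_x$ be the semilocalisation of $V_L$ above $x$, a semilocal Dedekind domain; then $\Pic(B_x)=0$ and there is a short exact sequence of $G$-modules
\begin{equation*}
0\to B_x^{\ast}\to L(V)^{\ast}\xrightarrow{\div}\bigoplus_{y\mid x}\Z\to0 .
\end{equation*}
Its long exact cohomology sequence identifies the kernel of the $x$-component of $\div$ on $H^2$ with the image of $H^2(G,B_x^{\ast})\to H^2(G,L(V)^{\ast})$. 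Thus $\alpha\in H^2(G,L(V)^{\ast})$ lies in $\Ker(\div)$ if and only if, for every $x\in V^{(1)}$, the class $\alpha$ lifts to $H^2(G,B_x^{\ast})$.

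It therefore suffices to compare $H^2(G,B_x^{\ast})$ with $\Br(\cO_{V,x})$. Applying the Hochschild--Serre sequence to the (pro-)\'etale $G$-torsor $\Spec B_x\to\Spec\cO_{V,x}$ and using $\Pic(\cO_{V,x})=\Pic(B_x)=0$, one obtains an isomorphism $H^2(G,B_x^{\ast})\xrightarrow{\sim}\Br(B_x/\cO_{V,x})\subseteq\Br(\cO_{V,x})$, compatible with restriction to the generic points. Hence, if $\alpha\in\Ker(\div)$, its image $\tilde\alpha\in\Br(k(V))$ lies in $\Br(\cO_{V,x})$ for all $x\in V^{(1)}$, so by purity for the Brauer group of the regular scheme $V$ — namely $\Br(V)=\bigcap_{x\in V^{(1)}}\Br(\cO_{V,x})$, which is due to Grothendieck when $\dim V\le 2$ (the case needed in \S\ref{sec:11cd}) and to Gabber in general — we get $\tilde\alpha\in\Br(V)$, and therefore $\alpha\in\Br(V_L/V)$. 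Conversely, if $\alpha\in\Br(V_L/V)$, then $\tilde\alpha$ restricts into $\Br(\cO_{V,x})$ and becomes trivial in $\Br(B_x)$, hence lies in $\Br(B_x/\cO_{V,x})=H^2(G,B_x^{\ast})$ for every $x$, so $\alpha\in\Ker(\div)$. This proves $\Br(V_L/V)=\Ker(\div)$, and the asserted exact sequence follows. The one genuinely nontrivial ingredient is the purity statement for $\Br(V)$, which is also where smoothness of $V$ is used; the remaining steps (the Shapiro computation of $H^2(G,\Div(V_L))$, the local exact sequences, and the compatibility of the localisation and Hochschild--Serre maps with passage to generic points) are routine, the only delicate point being the limit argument when $L/k$ is infinite.
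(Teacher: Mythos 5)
Your argument is correct, but it takes a genuinely different route from the paper's. The paper works sheaf-theoretically on $V$ itself: it uses the exact sequence of \'etale sheaves $0\to\G_m\to j_{*}\G_{m,\eta}\to\Div_V\to0$, the vanishing $H^1(V,\Div_V)=0$ coming from regularity, and the identification $H^i(V,j_{*}\G_{m,\eta})\cong H^i(k(V),\G_m)$, to produce the rows $0\to\Br(V)\to\Br(k(V))\to H^2(V,\Div_V)$ over $k$ and over $L$; it then takes kernels of the restriction columns and identifies the resulting relative terms with $H^2(G,L(V)^{\ast})$ and $H^2(G,\Div(V_L))$ by applying the Hochschild--Serre spectral sequence for $V_L\to V$ to the sheaves $j_{*}\G_{m,\eta}$ and $\Div_V$ --- no localisation at codimension-one points and no explicit purity theorem appear. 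You instead work entirely inside $\Br(k(V))$, decompose $\Div(V_L)$ over $x\in V^{(1)}$, analyse each component via the semilocal Dedekind ring $B_x$ and the Hochschild--Serre sequence for $\Spec B_x\to\Spec\cO_{V,x}$, and then glue using the purity statement $\Br(V)=\bigcap_{x\in V^{(1)}}\Br(\cO_{V,x})$. The trade-off is real: your route gives a concrete valuation-theoretic meaning to $\Ker(\div)$ (unramifiedness of the associated class at every codimension-one point of $V$), but it leans on Gabber's absolute purity as soon as $\dim V\ge 3$ --- and since the proposition is stated for an arbitrary smooth geometrically integral $V$, you cannot simply restrict to the surface case without weakening the statement --- whereas the paper's diagram chase avoids that input. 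The delicate points you flag (transitivity of $G$ on the fibre over $x$, which rests on $V_L$ being integral because $V$ is geometrically integral; the colimit argument for infinite $L/k$; compatibility of the crossed-product and Hochschild--Serre identifications with restriction to generic points) are genuine but routine, and you dispose of them adequately.
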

\begin{proof}
Let $j\colon \eta=\Spec k(V) \to V$ be the generic point of $V$. We have the
 following exact sequence of \'etale sheaves on $V$:
\begin{equation}
0 \to \G_m \to j_{\ast}\G_{m,\eta} \to \Div_V \to 0,
\end{equation} 
where $\Div_V$ is the sheaf of Cartier divisors on $V$. By regularity of
 $V$, we have $H^1(V,\Div_V)=0$. Moreover, we have $H^i(V,j_{\ast}\G_{m,\eta})
 \cong H^i(k(V),\G_m)$ for all $i \geq 0$. These yield the commutative
 diagram with exact rows:
\begin{equation*}
\xymatrix{
0 \ar[r] & \Br(V) \ar[d] \ar[r] & \Br(k(V)) \ar[d] \ar[r] & H^2(V,\Div_V)
\ar[d] \\
0 \ar[r] & \Br(V_L) \ar[r] & \Br(L(V)) \ar[r] & H^2(V_L,\Div_V).
}
\end{equation*}
Taking the kernel of each column, we obtain the exact sequence
\begin{equation*}
0 \to \Br(V_L/V) \to \Br(L(V)/k(V)) \to \Ker(H^2(V,\Div_V)\to
 H^2(V_L,\Div_V)).
\end{equation*}
Applying the Hochschild-Serre spectral sequence 
\begin{equation*}
E^{p,q}_2=H^p(G,H^q(V_L,\cdot)) \Rightarrow E^{p+q}=H^{p+q}(V,\cdot)
\end{equation*}
to sheaves $j_{\ast}\G_{m,\eta} \to \Div_V$ on $V$, we have the following commutative diagram
\begin{equation*}
\xymatrix{
 H^2(G,L(V)^{\ast}) \ar[d] \ar[r]^(0.4){\cong} &\Ker(\Br(k(V))\to
 \Br(L(V))) \ar[d]\\
 H^2(G,\Div(V_L)) \ar[r]^(0.35){\cong} & \Ker(H^2(V,\Div_V)\to
 H^2(V_L,\Div_V)), 
}
\end{equation*}
which completes the proof of Proposition \ref{prop:another_desc_Brauer_group}.
\end{proof}

Finally, in the last of this section, we introduce the notion of
specialization of Brauer groups. In \S \ref{sec:11cd}, we will see that
the Brauer group of surfaces of the form $x^3+y^3+cz^3+dt^3=0$ has a
{\it uniform} symbolic generator, that is, if we put
\begin{equation*}
e(c,d)=\left\{\dfrac{d}{c},
 \dfrac{x+\zeta y}{x+y}\right\}_3
\end{equation*}
where $c$ and $d$ are considered as {\it indeterminates} and if we want
 a symbolic generator of $\Br(V_P)/\Br(k)$, where $V_P$ is the surface
 of the form $x^3+y^3+c_0z^3+d_0t^3=0$ with $c_0$ and $d_0 \in
 k^{\ast}$, we can get it by specializing $e(c,d)$ at
 $(c,d)=(c_0,d_0)$. 

To make this notion of uniformity precise, we define specialization as
follows. Let $k$ be a field, $\cO_F$ a polynomial ring over $k$ with $r$
variables, $F$ its fractional field and $f_1, \ldots f_m$ homogeneous
polynomials in $\cO_F[x_0,\ldots,x_n]$.  Let $\cV$ be the projective
scheme over $\cO_F$ defined as:
\begin{equation*}
\cV = \Proj\left(\cO_F[x_0,\ldots,x_n]/(f_1,\ldots,f_m)\right)
 \stackrel{\pi}{\to} \Spec \cO_F.
\end{equation*}    
Let $\pi_F\colon V:=\cV_F \to \Spec F$ be the base change of $\pi$ to
$\Spec F$. Assume that $V$ is smooth over $F$.  Let $e \in \Br(V)$ be an
arbitrary element. If $(S_i)_{i \in I}$ is the projective system of the
non-empty affine open subschemes in $\A_{k}^r=\Spec\cO_F$, we have
\begin{equation*}
 \projlim_i(\cV \times_{\A_k^r}S_i) \cong V,
\end{equation*}
and there exists a non-empty affine open subscheme $S$ and $\tilde{e}
\in \Br(\cV\times_{\A_k^r}S)$ satisfying that $\cV \times_{\A_k^r}S$ is
smooth over $S$ and that
\begin{equation*}
 \res^S_{\Spec F}(\tilde{e}) =e,
\end{equation*}
where $\res^S_{\Spec F}\colon \Br(\cV\times_{\A^r_k}S) \to \Br(V).$ This
  follows from \cite{grothendieck1967ega4_4}(Proposition 17.7.8) and
  \cite{milne1980etale}(III, Lemma 1.16).  For a given $P \in S(k)$, we
  have the following diagram:
\begin{equation*}
\xymatrix{
V_P \ar^{P}[r] \ar^{\pi_0}[d] \ar@{}|{\square}[dr] & \cV \times_{\A^r_k}S
 \ar^{\pi_S}[d] \ar@{}|{\square}[dr] & V \ar[l] \ar^{\pi_F}[d] \\
\Spec k \ar^{P}[r] & S & \Spec F \ar[l], \\
}
\end{equation*}
where $V_P:=\cV\times_{\A^r_k}\Spec k$.
We define the specialization of $e$ at $P$ as
\begin{equation*}
\sp(e;P):=P^{\ast}\tilde{e} \in \Br(V_P).
\end{equation*}
By the regularity of $\cV \times_{\A_k^r}S$, the map $\res^S_{\Spec F}$
is injective, which implies that this definition is independent of the
choice of $S$.
\section{Preliminaries of diagonal cubic surfaces}\label{sec:diagonal_cubic_surface}

In this section, we are concerned with diagonal cubic surfaces, in
particular, their Picard groups and their Galois structures. We mainly
use the same notation as in \cite{colliot1987arithmetique}. Let $k$ be a
field containing a primitive cubic root $\zeta$ of unity. Let $V$ be the
projective surface over $k$ defined by a homogeneous equation
\begin{equation*}
 ax^3+by^3+cz^3+dt^3=0,
\end{equation*}
where $a$, $b$, $c$ and $d$ are in $k^{\ast}$. Let $\pi\colon V\to \Spec k$
denote the structure morphism. Now we put
\begin{equation*}
 \lambda=\dfrac{b}{a},\quad \mu=\dfrac{c}{a}\quad \text{and} \quad \nu=\dfrac{ad}{bc},
\end{equation*}
and then we can write as the equation of $V$ 
\begin{equation*}
 x^3+\lambda y^3+\mu z^3+\lambda\mu\nu t^3=0.
\end{equation*}
We define some extensions of $k$ which are
frequently used in this paper. Let $\alpha$, $\alpha'$ and $\gamma$ be
solutions in $\bar{k}$ of equations $X^3-\lambda=0$, $X^3-\mu=0$ and
$X^3-\nu=0$ respectively. Put $\beta=\alpha\gamma$ and
$\beta'=\alpha'\gamma$. We define a field $k'$ and $k''$ as
$k(\alpha,\gamma)$ and $k'(\alpha')$.

$\bar{V}$ is a del Pezzo surface, obtained from $\bar{\P^2}$ by
blowing-up general 6 points. Thus $\Pic(\bar{V})$ is free of rank 7. 
We can write 27 lines on $\bar{V}$ explicitly:
\begin{align}\label{eq:27_lines}
 L(i)&\colon x+\zeta^i\alpha y=z+\zeta^i\beta t=0, \notag \\
 L'(i)&\colon x+\zeta^i\alpha y=z+\zeta^{i+1}\beta t=0, \notag\\
 L''(i)&\colon x+\zeta^i\alpha y=z+\zeta^{i+2}\beta t=0, \notag\\
 M(i)&\colon x+\zeta^i\alpha' z=y+\zeta^{i+1}\beta' t=0, \notag\\
 M'(i)&\colon x+\zeta^i\alpha' z=y+\zeta^{i+2}\beta' t=0, \\
 M''(i)&\colon x+\zeta^i\alpha' z=y+\zeta^i\beta' t=0, \notag\\
 N(i)&\colon x+\zeta^i\alpha\beta' t=y+\zeta^{i+2}\alpha^{-1}\alpha' z=0, \notag\\
 N'(i)&\colon x+\zeta^i\alpha\beta' t=y+\zeta^i\alpha^{-1}\alpha' z=0, \notag\\
 N''(i)&\colon x+\zeta^i\alpha\beta' t=y+\zeta^{i+1}\alpha^{-1}\alpha' z=0,\notag
\end{align}
where $i$ is either $0$, $1$ or $2$.

Since six lines $L(0)$, $L(1)$, $L(2)$, $M(0)$, $M(1)$ and $M(2)$ are
mutually skew, we can get a $\bar{k}$-morphism $\pi\colon \bar{V} \to
\bar{\P^2}$ by blowing down these six lines. We define $l \in
\Pic(\bar{V})$ as the inverse image of a line on $\bar{\P^2}$. Then we
can obtain generators of $\Pic(\bar{V})\cong \Z^7$:
\begin{equation}\label{eq:generator_of_Pic}
[L(0)],\quad [L(1)],\quad [L(2)],\quad [M(0)],\quad [M(1)],\quad
 [M(2)],\text{ and } l,
\end{equation} 
where $[D]$ denotes the class of $D \in \Div(\bar{V})$ in $\Pic(\bar{V})$. 
Let $H$ be the hyperplane section of $\bar{V}$ defined by the equation
$x=0$, 
\begin{equation*}
 [L]=[L(0)]+[L(1)]+[L(2)], \text{ and } [M]=[M(0)]+[M(1)]+[M(2)],
\end{equation*}
we have the following relation: 
\begin{equation}\label{eq:relation_of_HlLM}
[H]=3l - [L] - [M].
\end{equation} 
We have $\Pic(V_{k''}) \cong \Pic(\bar{V})^{G_{k''}} \cong \Z^7$. As its
generators, we take the classes corresponding to $[L(i)]$, $[M(i)]$ and
$l \in \Pic(\bar{V})$. By abuse of notation, we use the same symbols as
in the case of $\Pic(\bar{V})$.

By the geometrical rationality of $V$ and the birational invariance of
Brauer groups (\cite{grothendieck1968brauer3}, III, Th\'eor\`em 7.1), we have
$\Br(\bar{V}) \cong \Br(\bar{\P^2}) =0$. Hence we rewrite the
sequence~(\ref{eq:fund_exact_seq}) as follows:
\begin{equation}\label{eq:fund_exact_seq_rational}
0 \to \Br(V)/\Br(k) \to H^1(k,\Pic(\bar{V})) \stackrel{d^{1,1}}{\to}
 H^3(k,\bar{k}^{\ast})\quad \text{(exact)}.
\end{equation}
This sequence plays a fundamental role in this paper.  

The structure of the group
$H^1(k,\Pic(\bar{V}))$ is well-known:
\begin{prop}[\cite{colliot1987arithmetique}, Proposition 1.]\label{prop:ctks}
\begin{equation*}
 H^1(k,\Pic(\bar{V})) \cong \begin{cases} 0 & \text{if one of }\nu,
		  \nu/\lambda, \nu/\mu \text{ is a cube in }k^{\ast}, \\
		  (\Z/3\Z)^{2} & \text{if exactly three of }\lambda,
		  \mu, \lambda/\mu, \lambda\mu\nu, \lambda\nu, \mu\nu \\
		  &\text{are cubes in }k^{\ast}, \\ \Z/3\Z &
		  \text{otherwise}. \end{cases}
\end{equation*}
\end{prop}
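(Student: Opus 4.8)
The plan is to replace the Galois cohomology group $H^1(k,\Pic(\bar V))$ by the cohomology of the small finite group $G:=\Gal(k''/k)$ and then to compute the latter configuration by configuration, using the explicit list \eqref{eq:27_lines} of the 27 lines. For the reduction: by the remark preceding the statement, $\Pic(V_{k''})\cong\Pic(\bar V)^{G_{k''}}\cong\Z^7$ with the very same generators, so $G_{k''}$ acts trivially on $\Pic(\bar V)$; hence $H^1(k'',\Pic(\bar V))=\Hom_{\mathrm{cont}}(G_{k''},\Z^7)=0$ because $G_{k''}$ is profinite and $\Z^7$ is discrete and torsion-free. The inflation--restriction sequence then yields $H^1(k,\Pic(\bar V))\cong H^1(G,\Pic(\bar V))$. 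Since $k''=k(\alpha,\alpha',\gamma)$ with $\alpha^3=\lambda$, $\alpha'^3=\mu$, $\gamma^3=\nu$ and $\zeta\in k$, the group $G$ is elementary abelian of exponent $3$ and rank $r=\dim_{\F_3}\langle\lambda,\mu,\nu\rangle\subseteq k^{\ast}/(k^{\ast})^3$, so $r\in\{0,1,2,3\}$.

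Next I would make the $G$-module $\Pic(\bar V)$ completely explicit. For $\sigma\in G$ with $\sigma(\alpha)=\zeta^a\alpha$, $\sigma(\alpha')=\zeta^b\alpha'$, $\sigma(\gamma)=\zeta^c\gamma$ (so $\sigma(\beta)=\zeta^{a+c}\beta$ and $\sigma(\beta')=\zeta^{b+c}\beta'$), one reads off from \eqref{eq:27_lines} that $\sigma$ sends $L(i)$ to $L(i+a)$, $L'(i+a)$ or $L''(i+a)$ according as $c\equiv 0,1,2$, and similarly for the $M$- and $N$-families; the action on $l$ is then pinned down by the relation $3l=[H]+[L]+[M]$ of \eqref{eq:relation_of_HlLM}, the hyperplane class $[H]$ being $G$-fixed. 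Writing each of the 27 line classes in the basis \eqref{eq:generator_of_Pic} via the classical dictionary between the 27 lines and the blow-up model (using the tritangent-plane relations) turns $G$ into an explicit group of $7\times 7$ integral matrices. The module type depends only on which of $\lambda,\mu,\lambda/\mu,\lambda\mu\nu,\lambda\nu,\mu\nu$, respectively of $\nu,\nu/\lambda,\nu/\mu$, are cubes in $k^{\ast}$, and the permutation symmetries of the diagonal equation (permuting and rescaling $y,z,t$) permute the three conditions $\nu,\nu/\lambda,\nu/\mu\in(k^{\ast})^3$, so there are only a handful of configurations to treat.

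Finally I would compute $H^1(G,\Pic(\bar V))$ in each configuration. If one of $\nu,\nu/\lambda,\nu/\mu$ is a cube, one may assume (after a symmetry) $\gamma\in k$, so $c\equiv 0$ for all $\sigma$; then $G$ merely cycles $\{L(0),L(1),L(2)\}$ and $\{M(0),M(1),M(2)\}$, one checks $\sigma(l)=l$ from the relation, and $\Pic(\bar V)$ splits as $\Z l\oplus\Z[\{L(i)\}]\oplus\Z[\{M(i)\}]$, a sum of $\Z$ with permutation $G$-modules, so $H^1(G,\Pic(\bar V))=0$. In the remaining cases I would either compute directly from the matrices --- for $G$ cyclic via the standard description $H^1(\Z/3\Z,M)=\Ker(N)/(\sigma-1)M$ with $N=1+\sigma+\sigma^2$, and for $r\geq 2$ via the Lyndon--Hochschild--Serre spectral sequence along a filtration of $G$ by cyclic subgroups --- or, more structurally, resolve $\Pic(\bar V)$ by permutation $G$-modules and use $H^1(G,\Z[G/H])=0$ together with $H^2(G,\Z[G/H])\cong\Hom(H,\Q/\Z)$, reducing the computation to $\F_3$-linear algebra. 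This produces $(\Z/3\Z)^2$ exactly when three of $\lambda,\mu,\lambda/\mu,\lambda\mu\nu,\lambda\nu,\mu\nu$ are cubes (Manin's configuration) and $\Z/3\Z$ otherwise.

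I expect the main obstacle to be organisational rather than conceptual: recording the $G$-action on $\Pic(\bar V)$ correctly for every configuration of cube classes (in particular expressing all 27 line classes in a fixed basis), choosing auxiliary permutation resolutions whose kernels are simple enough to compute with, and verifying that the case division is exhaustive and really matches the stated trichotomy --- notably, that the borderline ``exactly three are cubes'' condition is precisely the one that contributes the extra $\Z/3\Z$ summand.
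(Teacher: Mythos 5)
The paper does not actually prove this proposition: it is quoted from Colliot-Th\'el\`ene--Kanevsky--Sansuc, and the only indication given of the method is that one reduces to an explicit computation of $H^1(k'/k,\Pic(V_{k'}))$ for the finite extension $k'=k(\alpha,\gamma)$. Your plan is essentially that same method --- inflation--restriction down to a finite elementary abelian $3$-group acting through the explicit list of the $27$ lines, followed by a case analysis over the configurations of cube classes --- the only structural difference being that you descend to the degree-$27$ field $k''$ rather than the degree-$9$ field $k'$ used in the paper, which costs some extra bookkeeping but changes nothing; your reduction step and your description of the Galois action on the lines are correct. The one point to flag is that your argument is only actually carried out in the vanishing case: for the dichotomy between $\Z/3\Z$ and $(\Z/3\Z)^2$, which is where the real content of the trichotomy lies, you name adequate tools (Tate cohomology for cyclic quotients, Lyndon--Hochschild--Serre, permutation resolutions) but assert the outcome rather than deriving it, so as written the proposal is a workable plan matching the cited proof rather than a complete proof.
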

\noindent
In \cite{colliot1987arithmetique}, this proposition is proved by showing
the following isomorphism
\[
 H^1(k'/k,\Pic(V_{k'})) \cong H^1(k,\Pic(\bar{V}))
\]
and reducing to explicit calculation of cohomology of the finite
extension $k'/k$ with coefficients $\Pic(V_{k'})$. Since $V(k')
\neq \emptyset$, we have $\Pic(V_{k'}) \cong \Pic(V_{k''})^{\Gal(k''/k')}$;
moreover using the explicit defining equations of divisors
(\ref{eq:27_lines}) and the equation (\ref{eq:relation_of_HlLM}), we see
that
\begin{equation}
 \Pic(V_{k'}) = \Z l \oplus \Z[L(0)] \oplus \Z[L(1)] \oplus \Z[L(2)]
  \oplus \Z[M].
\end{equation}
In the following, we give an explicit generating
cocycle of $H^1(k'/k,\Pic(V_{k'}))$ under some conditions stated below.

First, in \S\ref{sec:11cd}, we consider cubic surfaces over $k$ defined
by $x^3+y^3+cz^3+dt^3=0$. If one of $cd$ and $d/c$ is in $(k^{\ast})^3$,
we have $H^1(k,\Pic(\bar{V}))=0$ by Proposition \ref{prop:ctks} and
hence $\Br(V)/\Br(k)=0$. Therefore we need not consider this case. If
neither $cd$ nor $d/c$ is in $(k^{\ast})^3$ and $c$ is in
$(k^{\ast})^3$, such surfaces are isomorphic to surfaces defined by
$x^3+y^3+z^3+dt^3=0$ and Manin has already found the structure and the
generators of their Brauer groups. Hence we may assume $c$, $d$, $cd$
and $d/c \notin (k^{\ast})^3$.

Secondly, in \S\ref{sec:1bcd}, we consider surfaces $x^3+by^3+cz^3+dt^3=0$ over
$k(b,c,d)$, where $b$, $c$ and $d$ are indeterminates. Hence $k(\alpha,
\gamma,\alpha')/k$ is an extension of degree 27.

Therefore in the sequel of this section, we always assume one of the following
conditions:

(i) $\lambda=1$, and neither $\mu$, $\nu$, $\mu\nu$ nor $\nu/\mu$
	   is cubic in $k^{\ast}$.

(ii) $k(\alpha, \gamma,\alpha')$ is a field extension of $k$ with
	   degree 27.

By Proposition \ref{prop:ctks}, $H^1(k'/k,\Pic(V_{k'}))\cong \Z/3\Z$ in
both cases. Note that under the condition~(i), $k(\alpha)=k$ and hence
$k'/k$ is of degree $3$.

Let $s$ be the generator of $G=\Gal(k'/k(\alpha))$ such that
$s\gamma=\zeta\gamma$ and $w$ the generator of $\Gal(k''/k')$ such that
$w\alpha'=\zeta\alpha'$. Note that $G$ and $\Gal(k''/k')$ are isomorphic
to $\Z/3\Z$ under one of the above assumptions and such elements $s$ and
$w$ {\it do} exist. Moreover, under the condition (ii), let $t$ be the
generator of $\Gal(k'/k(\gamma))$ such that $t\alpha=\zeta\alpha$.

The claim is the following:
\begin{prop}\label{prop:explicit_cocycle}
{\rm (1)} As a generator of $H^1(G,\Pic(V_{k'}))$, we can take a class
 of the following cocycle $\phi':$
\[
\phi'(1) = 0,\quad 
\phi'(s) = [L(0)]-[L(2)],\quad 
\phi'(s^2) = [L(0)]-[L(1)].
\]

{\rm(2)} Under the condition {\rm(ii)}, as a generator of
 $H^1(k'/k,\Pic(V_{k'}))$, we can take a class of the following cocycle
 $\phi:$
\[
\phi((st)^i) = 0,\quad 
\phi(s(st)^i) = [L(0)]-[L(2)],\quad 
\phi(s^2(st)^i) = [L(0)]-[L(1)], 
\]
where $i$ takes on any values in $\{0,1,2\}$.
\end{prop}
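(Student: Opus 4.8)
The plan is to prove the two statements by directly checking the cocycle conditions and then showing that the resulting classes are nonzero in the respective $H^1$ groups, which by Proposition~\ref{prop:ctks} are cyclic of order $3$; nonvanishing then forces ``generator''. First I would recall from the preceding discussion the explicit $\Z$-basis
\[
\Pic(V_{k'}) = \Z l \oplus \Z[L(0)] \oplus \Z[L(1)] \oplus \Z[L(2)] \oplus \Z[M],
\]
and record the Galois action on the $27$ lines induced by the formulas~(\ref{eq:27_lines}). The key point is that $s$ fixes $\alpha$ and sends $\gamma \mapsto \zeta\gamma$, hence $\beta = \alpha\gamma \mapsto \zeta\beta$; reading off~(\ref{eq:27_lines}) this permutes the $L$-type lines by $L(i) \mapsto L'(i)$ (or a cyclic relabelling among $L,L',L''$), and on classes in $\Pic(V_{k'})$ it acts by the corresponding cyclic permutation of $[L(0)],[L(1)],[L(2)]$ while fixing $l$ and $[M]$. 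Under the condition~(ii), the element $t$ fixes $\gamma$ and sends $\alpha \mapsto \zeta\alpha$, so it permutes the $M$-type lines; one checks that $st$ acts trivially on $\Pic(V_{k'})$ modulo the relation $[L] = [L(0)]+[L(1)]+[L(2)]$ being $G$-fixed, which is exactly why $\phi$ is allowed to be constant on each coset of $\langle st\rangle$.

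For part~(1), I would verify that $\phi'$ is a cocycle for $G = \langle s \rangle \cong \Z/3\Z$: the only nontrivial condition is
\[
\phi'(s^2) = \phi'(s) + s\cdot\phi'(s),
\]
and using $s\cdot([L(0)]-[L(2)]) = [L(1)]-[L(0)]$ (from the cyclic $3$-cycle on the $[L(i)]$) one gets $\phi'(s)+s\phi'(s) = ([L(0)]-[L(2)]) + ([L(1)]-[L(0)]) = [L(1)]-[L(2)]$, which should equal $\phi'(s^2) = [L(0)]-[L(1)]$; one must be careful to fix the cyclic direction of the permutation so that these agree — this amounts to choosing the labelling of $L,L',L''$ consistently with the chosen $s$, and is the one genuinely fiddly bookkeeping step. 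Having confirmed the cocycle identity, I would show $[\phi']$ is nonzero in $H^1(G,\Pic(V_{k'}))$: a coboundary would be $g \mapsto (g-1)D$ for some $D \in \Pic(V_{k'})$, so I would suppose $[L(0)]-[L(2)] = (s-1)D$, expand $D = al + b_0[L(0)] + b_1[L(1)] + b_2[L(2)] + e[M]$, apply $s-1$, and compare coefficients against the basis; the $l$- and $[M]$-coefficients of $(s-1)D$ vanish while those of $\phi'(s)$ do too, but matching the $[L(i)]$-coefficients yields an inconsistent linear system over $\Z$ (indeed the obstruction is detected by the sum $b_0+b_1+b_2$ being both forced and free, a standard ``norm'' computation), so no such $D$ exists. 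Since $H^1(G,\Pic(V_{k'})) \cong \Z/3\Z$ by Proposition~\ref{prop:ctks}, any nonzero class generates.

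For part~(2), under condition~(ii) I would first note $\Gal(k'/k) = \langle s, t\rangle \cong (\Z/3\Z)^2$, and that the restriction map $H^1(k'/k, \Pic(V_{k'})) \to H^1(G, \Pic(V_{k'}))$ is injective (both groups are $\Z/3\Z$, and by Proposition~\ref{prop:ctks} the inflation-restriction sequence collapses appropriately), so it suffices to check that $\phi$ is a well-defined cocycle on $\Gal(k'/k)$ restricting to $\phi'$ on $G$. Well-definedness reduces to: $st$ acts trivially on $\Pic(V_{k'})$ (so that $\phi$ being constant on $\langle st\rangle$-cosets and additive is consistent), and $\phi$ extended by $\phi(\sigma(st)^i) := \phi(\sigma)$ for $\sigma \in \{1,s,s^2\}$ satisfies $\phi(gh) = \phi(g) + g\cdot\phi(h)$ for all $g,h$ — this follows from part~(1)'s cocycle identity together with $st$ acting trivially on the relevant classes $[L(0)]-[L(2)]$ and $[L(0)]-[L(1)]$ (these are differences of $L$-classes, and $t$ permutes $M$-lines while $s$ and $t$ together cyclically permute $L$-lines in a way that $st$ undoes — the precise check is again pure permutation bookkeeping). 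The restriction of $\phi$ to $G = \langle s\rangle$ is literally $\phi'$, which is nonzero by part~(1), so $[\phi] \neq 0$ in $H^1(k'/k,\Pic(V_{k'})) \cong \Z/3\Z$ and hence generates.

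The main obstacle is purely combinatorial: pinning down the exact cyclic permutation of $\{[L(0)],[L(1)],[L(2)]\}$ (and of the $M$-classes) induced by $s$ and $t$ with the sign/direction conventions fixed by $s\gamma = \zeta\gamma$, $t\alpha = \zeta\alpha$, and then verifying that this makes the displayed $\phi'$ and $\phi$ genuine cocycles rather than off by a cyclic shift. Everything else — the nonvanishing via a coboundary computation and the ``nonzero in $\Z/3\Z$ implies generator'' step — is routine given Proposition~\ref{prop:ctks}.
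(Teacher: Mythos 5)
There is a genuine gap, and it sits exactly at the point you yourself flag as ``the one genuinely fiddly bookkeeping step'': your description of the Galois action on $\Pic(V_{k'})$ is wrong. Since $s\gamma=\zeta\gamma$ and $\beta=\alpha\gamma$, $s$ sends the line $L(i)$ to $L'(i)$ --- a \emph{different} exceptional line, not another $L(j)$. So $s$ does not induce any permutation of the basis classes $[L(0)],[L(1)],[L(2)]$, and it does not fix $l$ or $[M]$. To get the action on the chosen basis $l,[L(0)],[L(1)],[L(2)],[M]$ one must re-express $[L'(i)]$ via the intersection form and the relation $[H]=3l-[L]-[M]$, e.g.\ $[L'(0)]=2l-[L(0)]-[L(1)]-[M]$; the resulting matrix (written out in the paper) is far from a permutation matrix, e.g.\ $s(l)=4l-[L]+2[M]$ and $s[M]=3l-2[M]$. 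Your cocycle verification happens to survive this error, because on the rank-two sublattice of differences $[L(i)]-[L(j)]$ the true action does coincide with the $3$-cycle $0\mapsto 2\mapsto 1\mapsto 0$, so $s([L(0)]-[L(2)])=[L(2)]-[L(1)]$ and the identity $\phi'(s)+s\phi'(s)=[L(0)]-[L(1)]=\phi'(s^2)$ checks out. But your non-coboundary argument does not survive: it is false that the $l$- and $[M]$-coefficients of $(s-1)D$ vanish (already $(s-1)l=3l-[L]+2[M]$), so the linear system you propose to contradict is not the right one. With the correct matrix the system $(s-1)D=[L(0)]-[L(2)]$ does turn out to have no integral solution (one is forced to $-3b_1=-1$), but that computation has to be done with the full $5\times 5$ action. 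The paper does this by computing $\hat{H}^{-1}(G,\Pic(V_{k'}))=\lsub{N_G}{\Pic(V_{k'})}/I_G(\Pic(V_{k'}))\cong\Z/3\Z$ with generator $[L(1)]-[L(0)]$, which is the same idea carried out with the correct action.

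A second, smaller instance of the same error occurs in part (2): $st$ does \emph{not} act trivially on $\Pic(V_{k'})$ (it sends $L(i)$ to $L'(i+1)$, so $st[L(0)]=2l-[L(1)]-[L(2)]-[M]$). What is true, and what the paper uses, is only that the specific classes $[L(0)]-[L(1)]$ and $[L(0)]-[L(2)]$ are $st$-invariant; that weaker statement suffices to check that $\phi$ is a cocycle and that its restriction to $\langle s\rangle$ is $\phi'$, whence $[\phi]\neq 0$ and generates. Your overall strategy (direct cocycle/coboundary check, nonzero in a group known to be $\Z/3\Z$ implies generator, restriction to $\langle s\rangle$ for part (2)) matches the paper's, but as written the proof rests on an incorrect Galois action and the key nonvanishing step would have to be redone from scratch.
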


\begin{proof}
(1) Since $G$ is a finite cyclic group, we have the following isomorphism:
\[
 H^1(G,\Pic(V_{k'})) \cong \hat{H}^{-1}(G,\Pic(V_{k'})) \cong 
\dfrac{\lsub{N_G}{\Pic(V_{k'})}}{I_G(\Pic(V_{k'}))}, 
\]
where the norm $N_G$ maps $x$ to $\sum_{s \in G} sx$ and the map $I_G$
maps $x$ to $sx-x$ for all $x \in \Pic(V_{k'})$. 
The action of $s$ on $\Pic(V_{k'})$ is as follows:
\[
 s\begin{pmatrix} l \\ [L(0)] \\ [L(1)] \\ [L(2)] \\ M \end{pmatrix}
=\begin{pmatrix} 4 & -1 & -1 & -1 & 2 \\
                 2 & -1 & -1 & 0 & -1 \\
                 2 & 0 & -1 & -1 & -1 \\
                 2 & -1 & 0 & -1 & -1 \\
                 3 & 0 & 0 & 0 & -2 
 \end{pmatrix}
 \begin{pmatrix} l \\ [L(0)] \\ [L(1)] \\ [L(2)] \\ M \end{pmatrix}.
\]

By a straightforward calculation, we know that
 $\lsub{N_G}{\Pic(V_{k'})}/I_G(\Pic(V_{k'})) \cong \Z/3\Z$ and
 $[L(1)]-[L(0)]$ is its generator. Computing the above isomorphisms
 explicitly, we easily find that $\phi'$ is a cocycle whose class in
 $H^1(G,\Pic(V_{k'}))$ is a generator.

Next we consider the claim (2). First, noting that $[L(1)]-[L(0)]$ is
 $st$-invariant, we can easily check $\phi$ is a cocycle.  Moreover, the
 image of $[\phi]$ under the restriction
\begin{equation*}
H^1(k'/k,\Pic(V_{k'})) \to H^1(G,\Pic(V_{k'})) \cong \Z/3\Z
\end{equation*}
is the class $[\phi']$ appearing in (1). Hence $\phi$ is also a non-zero
 element, in fact, a generator of $H^1(k'/k,\Pic(V_{k'}))$.
\end{proof}
\if0
Under the assumption (1), we have $k(\alpha)=k$ and hence
\begin{equation*}
 H^1(k'/k(\alpha),\Pic(V_{k'})) = H^1(k'/k,\Pic(V_{k'})),
\end{equation*}
which is isomorphic to $H^1(k,\Pic(\bar{V}))$ by Lemma
\ref{L-rfe}. Under the assumption (2), these two groups
$H^1(k'/k(\alpha),\Pic(V_{k'}))$ and $H^1(k'/k,\Pic(V_{k'}))$ 
are apparently different, but we will later see in \S \ref{S-1bcd} that
these are isomorphic to each other.
\fi

In the last of this section, we introduce the following description of
$\Pic(V_{k'})$, which plays an important role in \S \ref{sec:11cd} and \S
\ref{sec:1bcd}. Let $\cD$ be the following free abelian group of rank 10:
\begin{equation*}
 \cD=\Z H \oplus \bigoplus_{i=0}^2 \Z L(i) \oplus \bigoplus_{i=0}^2\Z L'(i) \oplus \bigoplus_{i=0}^2\Z L''(i).
\end{equation*}
We see that $\cD$ is a $G$-submodule of $\Div(V_{k'})$ by using
(\ref{eq:27_lines}). Let $\cD_0$ be the $G$-submodule generated by the
following five divisors:
\begin{align*}
 D_1=\div(f_1)=(L(0)+L'(0)+L''(0))-H, \\
 D_2=\div(f_2)=(L(1)+L'(1)+L''(1))-H, \\
 D_3=\div(f_3)=(L(0)+L'(2)+L''(1))-H, \\
 D_4=\div(f_4)=(L(1)+L'(0)+L''(2))-H, \\
 D_5=\div(f_5)=(L(2)+L'(1)+L''(0))-H, 
\end{align*}
where
\begin{equation*}
 f_1=\dfrac{x+\alpha y}{x},\quad f_2=\dfrac{x+\zeta\alpha y}{x},\quad
 f_3=\dfrac{z+\beta t}{x},\quad f_4=\dfrac{z+\zeta\beta t}{x},\quad
 f_5=\dfrac{z+\zeta^2\beta t}{x}.
\end{equation*}
\begin{lem}\label{lem:exact_seq_of_Div_and_Pic}
Let $\cD$ and $\cD_0$ be as above$.$ Then we have the following exact
 sequence of $G$-modules$:$
\begin{equation*}
 0 \to \cD_0 \to \cD \to \Pic(V_{k'}) \to 0.
\end{equation*}
\end{lem}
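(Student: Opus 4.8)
The central map is the divisor-class homomorphism $\phi\colon\cD\to\Pic(V_{k'})$, $D\mapsto[D]$; it is well defined because the ten curves $H$, $L(i)$, $L'(i)$, $L''(i)$ are rational over $k'$, and it is $G$-equivariant, as is the inclusion $\cD_0\hookrightarrow\cD$. Since exactness is a property of the underlying abelian groups, the plan is to prove three things: (i)~$\cD_0\subseteq\ker\phi$; (ii)~$\cD/\cD_0$ is free abelian of rank~$5$; (iii)~$\phi$ is surjective. Granting these, (i) lets $\phi$ factor as $\bar\phi\circ q$ with $q\colon\cD\to\cD/\cD_0$ the quotient; by (iii) the map $\bar\phi$ is surjective, and by (ii) together with the known structure $\Pic(V_{k'})\cong\Z^{5}$ it is a surjection of free abelian groups of equal finite rank, hence an isomorphism. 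Then $\ker\phi=\cD_0$ and the sequence is exact.

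Part~(i) is immediate: $D_i=\div(f_i)$ by construction, so $[D_i]=0$; and using $s\alpha=\alpha$ and $s\gamma=\zeta\gamma$ together with~(\ref{eq:27_lines}) one checks that $s$ fixes $D_1$ and $D_2$ and cyclically permutes $D_3$, $D_4$, $D_5$, so the $G$-submodule $\cD_0$ is simply the subgroup generated by $D_1,\dots,D_5$, which lies in $\ker\phi$. For~(ii), I would write out the $5\times 10$ integer matrix expressing $D_1,\dots,D_5$ in the given basis of $\cD$ and exhibit a $5\times 5$ submatrix of determinant $\pm1$; the columns indexed by $L'(0)$, $L'(1)$, $L'(2)$, $L''(1)$, $L''(2)$ work, after a short cofactor expansion. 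This shows $D_1,\dots,D_5$ extend to a $\Z$-basis of $\cD$, so $\cD_0$ is a direct summand and the images of $H$, $L(0)$, $L(1)$, $L(2)$, $L''(0)$ form a $\Z$-basis of $\cD/\cD_0$.

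For~(iii), by~(ii) it suffices to check that $[H]$, $[L(0)]$, $[L(1)]$, $[L(2)]$, $[L''(0)]$ generate $\Pic(V_{k'})=\Z l\oplus\bigoplus_{i=0}^{2}\Z[L(i)]\oplus\Z[M]$. Three of these are already generators, so one only needs to obtain $l$ and $[M]$. Here I would use the explicit equations~(\ref{eq:27_lines}) and the blow-down $\bar V\to\bar{\P^2}$ to identify the class of $L''(0)$: computing the intersection numbers $L''(0)\cdot L(i)$ ($i=0,1,2$) and $L''(0)\cdot M(j)$ ($j=0,1,2$) forces $[L''(0)]=l-[L(0)]-[L(2)]$ in $\Pic(\bar V)$, hence in $\Pic(V_{k'})$. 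Then $l=[L''(0)]+[L(0)]+[L(2)]$ lies in the image of $\phi$, and so does $[M]=3l-[L(0)]-[L(1)]-[L(2)]-[H]$ by~(\ref{eq:relation_of_HlLM}); thus $\phi$ is surjective.

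The one genuinely non-formal ingredient is this geometric identification of the classes of the lines $L'(i)$, $L''(i)$ in $\Pic(\bar V)$ used in~(iii): one must locate them among the $27$ lines of the blow-up model, either through the intersection-number bookkeeping sketched above or by invoking the explicit list of the $27$ lines as in~\cite{colliot1987arithmetique}. Everything else — parts~(i) and~(ii) and the concluding rank argument — is formal or a finite check.
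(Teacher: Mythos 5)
Your proposal is correct and follows essentially the same route as the paper: surjectivity is obtained exactly as in the text by expressing $l$ (via $[L''(0)]=l-[L(0)]-[L(2)]$, read off from the intersection matrix) and then $[M]$ (via (\ref{eq:relation_of_HlLM})) in terms of the images of $H$, $L(i)$, $L'(i)$, $L''(i)$, and exactness in the middle is a rank count. Your unimodular $5\times 5$ minor (which I checked does have determinant $1$) in fact makes precise the paper's terser ``comparing the ranks'' step, since equality of ranks alone would only show that $\ker\phi/\cD_0$ is finite, whereas freeness of $\cD/\cD_0$ of rank $5$ plus the surjection onto $\Pic(V_{k'})\cong\Z^5$ forces $\cD_0=\ker\phi$.
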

\begin{proof}
 For the exactness at $\Pic(V_{k'})$, it suffices to show that we can
 write the classes $l$ and $[M]$ as linear combinations of $[H]$,
 $[L(i)]$, $[L'(i)]$ and $[L''(i)]$.  The intersection matrix with
 respect to the basis in (\ref{eq:generator_of_Pic}) is the diagonal
 matrix with entries $-1,-1,-1,-1,-1,-1$ and $1$. By using this matrix,
 (\ref{eq:27_lines}) and (\ref{eq:relation_of_HlLM}), we can write
 $[L'(0)]$ and $[L''(0)]$ as follows:
\begin{equation*}
 [L'(0)]=2l-[L(0)]-[L(1)]-[M],\quad [L''(0)]=l-[L(0)]-[L(2)],
\end{equation*}
which implies the surjectivity of $\cD \to \Pic(V_{k''})$.

The exactness at $\cD_0$ is trivial by definition and we also prove the
 exactness at $\cD$ by comparing the ranks of $\cD_0, \cD$ and
 $\Pic(V_{k'})$. This completes the proof of this lemma.
\end{proof}

\section{The case $x^3+y^3+cz^3+dt^3=0$}\label{sec:11cd}
In this section, let $k$ be a field containing a primitive cubic root $\zeta$ of
unity. The result in this section is:
\begin{thm}\label{thm:11cd}
 Let $V$ be the cubic surface over $k$ defined by a
homogeneous equation $x^3+y^3+cz^3+dt^3=0$, where $c$ and $d \in
k^{\ast}$. Moreover, we assume the condition {\rm(i)} in \S
{\rm\ref{sec:diagonal_cubic_surface}}, that is, $c$, $d$, $cd$ and $d/c$
are not in $(k^{\ast})^3$. Then we have the following$:$

 {\rm (1)} The group $\Br(V)/\Br(k)$ is isomorphic to $\Z/3\Z.$

 {\rm (2)} The symbol
\begin{equation*}
 e_1=\left\{\dfrac{d}{c}, \dfrac{x+\zeta y}{x+y}\right\}_3 \in \Br(k(V))
\end{equation*} 
is contained in $\Br(V).$

 {\rm (3)} The image of $e_1$ in $\Br(V)/\Br(k)$ is a generator of this
	   group$.$
\end{thm}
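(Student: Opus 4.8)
The plan is to combine the abstract structure result of Proposition~\ref{prop:ctks} (which gives part (1) for free under condition~(i)) with an explicit cocycle computation to prove parts (2) and (3). For part (1), since $c$, $d$, $cd$, $d/c \notin (k^{\ast})^3$ and $\lambda=1$, $\mu=c$, $\nu=d/c$, we are in the ``otherwise'' case of Proposition~\ref{prop:ctks}, so $H^1(k,\Pic(\bar V)) \cong \Z/3\Z$; it remains to see that the differential $d^{1,1}$ in~(\ref{eq:fund_exact_seq_rational}) kills this group, which will follow once we exhibit an actual Brauer class (part (2)) mapping to a generator, since then $\Br(V)/\Br(k) \hookrightarrow H^1(k,\Pic(\bar V)) \cong \Z/3\Z$ is forced to be onto. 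So the real content is in (2) and (3), and these I would prove together.

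For part (2), I would show $e_1 \in \Br(V)$ by checking it lies in $\Br(V_{k'}/V)$ via Proposition~\ref{prop:another_desc_Brauer_group} with $L=k'$, $G=\Gal(k'/k)\cong\Z/3\Z$ (recall $k(\alpha)=k$ here, so $k'=k(\gamma)$). Concretely: the symbol $\{d/c, (x+\zeta y)/(x+y)\}_3$ becomes trivial over $k'(V)$ because $d/c = \nu$ is a cube in $k'$ (as $\gamma^3=\nu$), so $\{d/c, -\}_3$ vanishes in $\Br(k'(V))$; hence $e_1 \in \Br(k'(V)/k(V))$. To place it in $\Br(V_{k'}/V)$ I must verify the divisor condition of Proposition~\ref{prop:another_desc_Brauer_group}, i.e.\ that the corresponding class in $H^2(G, k'(V)^\ast)$ maps to zero in $H^2(G,\Div(V_{k'}))$. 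This is where I would use the explicit generating cocycle $\phi'$ of Proposition~\ref{prop:explicit_cocycle}(1) and the exact sequence $0\to\cD_0\to\cD\to\Pic(V_{k'})\to 0$ of Lemma~\ref{lem:exact_seq_of_Div_and_Pic}: the cyclic algebra $(\nu, (x+\zeta y)/(x+y))$ corresponds under the standard identification ($H^2(G,k'(V)^\ast) \cong k(V)^\ast/N_{k'/k}k'(V)^\ast$, using that $s$ sends $\gamma\mapsto\zeta\gamma$) to the class of the function $(x+\zeta y)/(x+y)$, and I would compute its divisor on $V_{k'}$ using the 27 lines~(\ref{eq:27_lines}); since with $\lambda=1$ we have $\alpha=1$, the divisor of $x+\zeta^i y$ restricted to $V$ is supported on $L(i)+L'(i)+L''(i)$, and one checks the resulting class in $H^2(G,\Div(V_{k'}))$ is zero, essentially because $\div((x+\zeta y)/(x+y)) = D_2 - D_1$ lies in $\cD_0$ and the relevant part of $\Div(V_{k'})$ is $G$-induced (so has trivial $H^2$). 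This is the step I expect to be the main obstacle: correctly matching the symbol-algebra description to the cocycle description and carrying out the divisor bookkeeping on the cubic surface without sign or indexing errors.

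For part (3), I would trace the image of $e_1$ under the injection $\Br(V)/\Br(k) \hookrightarrow H^1(k,\Pic(\bar V)) \cong H^1(k'/k,\Pic(V_{k'}))$ and show it equals the class of $\phi'$ from Proposition~\ref{prop:explicit_cocycle}(1), which is a generator. Concretely, the connecting map takes the class of $f = (x+\zeta y)/(x+y) \in k(V)^\ast$ (representing $e_1$ via the cyclic-algebra/$H^2(G,-)$ picture) to the $1$-cocycle $\sigma \mapsto$ (a chosen divisor $E$ with $\sigma E - E = \div(\sigma^{-1} \text{-twist of } f)$ or the analogous formula coming from the boundary in $0\to\cD_0\to\cD\to\Pic\to 0$), and I would verify this lands on $[L(0)]-[L(2)]$ for $\sigma=s$, matching $\phi'(s)$. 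Since $\phi'$ generates $H^1(G,\Pic(V_{k'}))\cong\Z/3\Z$, the image of $e_1$ is nonzero, hence a generator, which simultaneously shows $\Br(V)/\Br(k) \cong \Z/3\Z$ is generated by $e_1$ and confirms $d^{1,1}=0$. Throughout, the key inputs are: Proposition~\ref{prop:ctks} for the group structure, Proposition~\ref{prop:another_desc_Brauer_group} for membership in $\Br(V)$, Proposition~\ref{prop:explicit_cocycle}(1) for the target generator, and Lemma~\ref{lem:exact_seq_of_Div_and_Pic} together with the explicit line equations~(\ref{eq:27_lines}) to make all the divisor computations effective.
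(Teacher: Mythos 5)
Your proposal is correct and follows essentially the same route as the paper: parts (2) and (3) are established by matching the cocycle of the cyclic algebra $\left\{\nu, \frac{x+\zeta y}{x+y}\right\}_3$ in $H^2(G,k'(V)^{\ast})$ against the connecting image of the generating cocycle $\phi'$ of Proposition~\ref{prop:explicit_cocycle}(1), using Proposition~\ref{prop:another_desc_Brauer_group} and the divisor bookkeeping with the $27$ lines (your observation that $s$ freely permutes $L(i)\to L'(i)\to L''(i)$, so that $\div\bigl(\frac{x+\zeta y}{x+y}\bigr)=D_2-D_1$ is a norm, is exactly why the image in $H^2(G,\Div(V_{k'}))$ dies). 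The only, harmless, divergence is in part (1), where the paper obtains surjectivity onto $H^1(k,\Pic(\bar{V}))$ directly from the rational point $(1:-1:0:0)$ via Lemma~\ref{lem:isom_condition}, whereas you deduce it a posteriori from the explicit class $e_1$; both are valid.
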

\begin{proof}
Let $G=\Gal(k'/k)$ and $s \in G$ a generator such that
$s\gamma=\zeta\gamma$. 

First we consider (1). This surface has a $k$-rational point
$P=(1:-1:0:0)$. Therefore the claim of (1) follows from Lemma
\ref{lem:isom_condition} and Proposition \ref{prop:ctks}.

Next we consider (2). Let $\phi'$ be as in Proposition
 \ref{prop:explicit_cocycle} (1). Computing
 the cocycle $\partial'\phi'$, where
\[
\partial'\colon H^1(G,\Pic(V_{k'})) \to H^2(G,k'(V)^{\ast}/k'^{\ast})
\]
is the connecting homomorphism induced by the exact sequence of $G$-modules:
\begin{equation}\label{eq:divisor}
 0 \to k'(V)^{\ast}/k'^{\ast} \to \Div(V_{k'}) \to \Pic(V_{k'}) \to 0,
\end{equation}
we can show
\[
 \partial'\phi'(s^i,s^j)=\left(\dfrac{f_2}{f_1}\right)^{a(i,j)} \in
 k'(V)^{\ast}/k'^{\ast}, \quad a(i,j):= \left\lfloor\dfrac{i+j}{3}\right\rfloor
		-\left\lfloor\dfrac{i}{3}\right\rfloor
		-\left\lfloor\dfrac{j}{3}\right\rfloor.
\]

On the other hand, the symbol $\{\nu, f_2/f_1\}_3 \in
\Br(k(V))$ is equal to 
\begin{equation*}
 (\chi_{3,\nu},f_2/f_1) \in H^2(k(V),\bar{k(V)}^{\ast}),
\end{equation*}
where $\chi_{3,\nu} \in \Hom(G_{k(V)},\Q/\Z) \cong H^2(k(V),\Z)$ is the
 cyclic character of order 3 associated to $\nu$ and $(\cdot,\cdot )$ be the symbol defined by
\[
(\cdot,\cdot)\colon H^2(k(V),\Z)
  \otimes H^0(k(V),\bar{k(V)}^{\ast}) \stackrel{\cup}{\to}
  H^2(k(V),\bar{k(V)}^{\ast});\quad (\chi,f) \mapsto f \cup \chi.
\]
For a proof, see \cite{serre1968corps}. Moreover, by the following commutative
diagram
\begin{equation*}
 \xymatrix{
  H^2(k(V),\Z) \otimes H^0(k(V),\bar{k(V)}^{\ast})
  \ar^(0.6){\cup}[r] & H^2(k(V),\bar{k(V)}^{\ast})  \\
  H^2(k'(V)/k(V),\Z) \otimes H^0(k'(V)/k(V),k'(V)^{\ast}) \ar[u] \ar^(0.62){\cup}[r] &
  H^2(k'(V)/k(V),k'(V)^{\ast}) \ar@{^(->}[u] \\
  H^2(G,\Z) \otimes H^0(G,k'(V)^{\ast}) \ar^{\cong}[u] \ar^(0.55){\cup}[r] &
   H^2(G,k'(V)^{\ast}), \ar^{\cong}[u] \\ 
 }
\end{equation*}
$(\chi_{\nu},f_2/f_1)$ can be considered as an element in
$H^2(G,k'(V)^{\ast})$, and we see that the corresponding cocycle is of the
form
\begin{equation*}
 \left\{\nu, \dfrac{f_2}{f_1}\right\}_3(s^i,s^j)=\left(\dfrac{f_2}{f_1}\right)^{a(i,j)} \in k'(V)^{\ast}.
\end{equation*}
Finally, we have the following commutative diagram with all rows and columns exact:
\begin{equation*}
\xymatrix{
  & \Br(k'/k) \ar@{=}[r] \ar[d] & \Br(k'/k) \ar[d] & \\
  0 \ar[r] & \Br(V_{k'}/V) \ar[r] \ar[d] &
   H^2(G,k'(V)^{\ast}) \ar^{\div}[r] \ar[d] &
  H^2(G,\Div(V_{k'})) \ar@{=}[d] \\
  0 \ar[r] & H^1(G,\Pic(V_{k'})) \ar^{\partial'}[r] &
  H^2(G,k'(V)^{\ast}/k'^{\ast}) \ar^{\div}[r] & H^2(G, \Div(V_{k'})), \\
}
\end{equation*}
where the middle column is induced by the exact sequence
\begin{equation*}
0 \to k'^{\ast} \to k'(V)^{\ast} \to k'(V)^{\ast}/k'^{\ast} \to 0,
\end{equation*}
the middle row is the result of Proposition
\ref{prop:another_desc_Brauer_group}, the bottom row is induced by the
exact sequence (\ref{eq:divisor}) and the triviality of its leftmost
term follows from the fact that the action of $G$ on $\Div(V_{k'})$ maps
one basis to another. Then the map
\begin{equation*} 
\Br(V_{k'}/V) \to H^1(G,\Pic(V_{k'}))
\end{equation*}
is naturally induced by  
\begin{equation*}
H^2(G,k'(V)^{\ast}) \to H^2(G,k'(V)^{\ast}/k'^{\ast}).
\end{equation*}
The fact that $\partial'\phi'$ and $\left\{\nu,
 f_2/f_1\right\}_3$ coincide in $H^2(G,k'(V)^{\ast}/k'^{\ast})$
 shows that
\begin{equation*}
 \left\{\nu,f_2/f_1\right\}_3=\left\{\dfrac{d}{c},\dfrac{x+\zeta y}{x+y}\right\}_3 \in
 \Br(V_{k'}/V) \subset \Br(V),
\end{equation*}
which completes the proof of (2).

Finally we consider (3). By the above argument, $[\phi']$ and 
$\left\{\nu, f_2/f_1\right\}_3$ 
coincide in $H^1(G,\Pic(V_{k'}))$. Hence we can take 
\begin{equation*}
 \left\{\dfrac{d}{c}, \dfrac{x+\zeta y}{x+y}\right\}_3 
\end{equation*}
as a generator of the group $\Br(V)/\Br(k)$. This completes the proof of
 Theorem \ref{thm:11cd}.
\end{proof}

By using the specialization of Brauer groups, we can formulate Theorem
\ref{thm:11cd} as follows:
\begin{cor}
 Let $k$ be as in Theorem {\rm \ref{thm:11cd}}$,$ $\cO_F=k[c,d],$ $F$ its
 fractional field and 
\begin{equation*}
 V=\Proj(F[x,y,z,t]/(x^3+y^3+cz^3+dt^3)).
\end{equation*}
 Then
\begin{equation*}
 e_1=\left\{\dfrac{d}{c},\dfrac{x+\zeta y}{x+y}\right\}_3 \in \Br(V)
\end{equation*}   
is a uniform generator$,$ that is$,$ for all $P=(c_0,d_0) \in
 k^{\ast}\times k^{\ast}$ such that
 $c_0,$ $d_0,$ $c_0d_0$ and $d_0/c_0 \notin (k^{\ast})^3,$
$\sp(e_1;P)$ is a generator of $\Br(V_P)/\Br(k).$
\end{cor}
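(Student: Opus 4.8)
The plan is to reduce the statement to Theorem \ref{thm:11cd}. Concretely, I would show that for every $P=(c_0,d_0)\in k^{\ast}\times k^{\ast}$ with $c_0,d_0,c_0d_0,d_0/c_0\notin(k^{\ast})^3$ one has
\[
\sp(e_1;P)=\left\{\dfrac{d_0}{c_0},\dfrac{x+\zeta y}{x+y}\right\}_3\in\Br(V_P);
\]
granting this, parts (1) and (3) of Theorem \ref{thm:11cd} applied over $k$ show that this element generates $\Br(V_P)/\Br(k)$, which is what we want. (That $e_1$ indeed lies in $\Br(V)$ is Theorem \ref{thm:11cd}(2) applied over $F=k(c,d)$, since $c$, $d$, $cd$, $d/c$ are not cubes in $F^{\ast}$.)

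The key step is to realize $e_1$ over the whole family, not merely over some uncontrolled open of the base. Let $\cV=\Proj(\cO_F[x,y,z,t]/(x^3+y^3+cz^3+dt^3))$ over $\cO_F=k[c,d]$ and take $S=\G_{m,k}^2=\Spec k[c^{\pm 1},d^{\pm 1}]$. Because $c$ and $d$ are units on $S$, the morphism $\cV\times_{\A^2_k}S\to S$ is smooth with smooth geometrically integral fibres and $\cV\times_{\A^2_k}S$ is regular, so $\Br(\cV\times_{\A^2_k}S)\inj\Br(F(V))$. Viewing $e_1$ as the symbol $\{d/c,(x+\zeta y)/(x+y)\}_3$ in $\Br(F(V))$, I would check that it is unramified at every codimension-one point $D$ of $\cV\times_{\A^2_k}S$: if $D$ is horizontal over $S$ it is a codimension-one point of the generic fibre $V=\cV_F$ (and the associated local ring is literally $\cO_{V,D}$), where the residue of $e_1$ vanishes because $e_1\in\Br(V)$; if $D$ is vertical, lying over a curve $C\subset S$, then both $d/c$ and $(x+\zeta y)/(x+y)$ are units at $D$ (the zero and polar loci of $(x+\zeta y)/(x+y)$ meet the smooth cubic surface $\cV_{\kappa(C)}$ in a proper closed subset), so the tame residue at $D$ is trivial. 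By purity for the Brauer group of the regular scheme $\cV\times_{\A^2_k}S$, this produces a class $\tilde e\in\Br(\cV\times_{\A^2_k}S)$ with $\res^{S}_{\Spec F}(\tilde e)=e_1$. By the independence of $\sp$ on the chosen model (\S\ref{sec:Brauer_group}), $\tilde e$ computes $\sp(e_1;\cdot)$ on all of $S(k)$; in particular $\sp(e_1;P)$ is defined at every admissible $P$.

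It remains to identify $\sp(e_1;P)=P^{\ast}\tilde e$. Let $Z\subset\cV\times_{\A^2_k}S$ be the divisor where $(x+\zeta y)/(x+y)$ has a zero or a pole and put $U=(\cV\times_{\A^2_k}S)\setminus Z$; on $U$ both $d/c$ and $(x+\zeta y)/(x+y)$ are units, so $\tilde e|_U$ is the cup product of the associated Kummer classes in $\Br(U)$. For $P$ admissible, $V_P$ is a smooth cubic surface and $Z$, being horizontal, meets it in a proper closed subset, so $V_P\cap U$ is dense open in $V_P$. Restricting $\tilde e|_U$ along $V_P\cap U\inj U$ and using functoriality of the cup product under $P$ gives
\[
\sp(e_1;P)\big|_{V_P\cap U}=\left\{\dfrac{d_0}{c_0},\dfrac{x+\zeta y}{x+y}\right\}_3\big|_{V_P\cap U}.
\]
Since $\Br(V_P)\inj\Br(k(V_P))$ and the symbol on the right already lies in $\Br(V_P)$ by Theorem \ref{thm:11cd}(2), the two classes coincide in $\Br(V_P)$, and Theorem \ref{thm:11cd}(1),(3) then finish the proof.

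The one genuinely non-formal step is the middle one: the symbol $e_1$ is a priori only unramified on the generic fibre $V$, and one must propagate this to the whole family $\cV\times_{\A^2_k}\G_{m,k}^2$ — this is exactly the residue computation above together with purity, and it is what guarantees that $\sp(e_1;\cdot)$ is defined at every admissible point. All other steps are bookkeeping with restriction maps and the functoriality of norm residue symbols.
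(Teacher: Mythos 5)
Your proposal is correct and follows essentially the same route as the paper: lift $e_1$ to $\Br(\cV\times_{\A^2_k}\G_{m,k}^2)$ via Gabber's absolute purity sequence, restrict to the open subscheme where $d/c$ and $(x+\zeta y)/(x+y)$ are units so the class is visibly a cup product of Kummer classes, and identify $P^{\ast}\tilde e$ with the specialized symbol using the injection $\Br(V_P)\inj\Br(k(V_P))$ before invoking Theorem \ref{thm:11cd}. The only difference is that you spell out the horizontal/vertical residue computation that the paper dismisses as ``concrete calculations,'' which is a welcome addition rather than a deviation.
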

\begin{proof}
We confirm that $\sp(e_1;P)$ is in fact the desired symbol, that is, 
\begin{equation*}
 \left\{\dfrac{d_0}{c_0},\dfrac{x+\zeta y}{x+y}\right\}_3.
\end{equation*}
 We define $S \subset \A_k^2$ and $\cV$ to be:
\begin{align*}
 S&=\G_{m,k} \times \G_{m,k}=\Spec k[c^{\pm},d^{\pm}],\\
 \cV&=\Proj \cO_F[x,y,z,t]/(x^3+y^3+cz^3+dt^3).
\end{align*}
where the symbol $c^{\pm}$ is the abbreviation of $c$ and $c^{-1}$. Put
 $\cV \times S:=\cV \times_{\A_k^2} S$. we see that $\cV \times S$ is smooth over
 $S$. Moreover, we note that $e_1$ can lift to $\tilde{e} \in
 \Br(\cV\times S)$.  This follows from concrete calculations using the
 following exact sequence, which is a consequence of the absolute purity
 due to Gabber~\cite{fujiwara2002proof}:
\begin{equation*}
 0 \to \Br(\cV\times S) \to \Br(F(V)) \stackrel{\bigoplus_{x}
  \res_x}{\longrightarrow} \bigoplus H^1(\kappa(x),\Q/\Z),
\end{equation*}
where the sum is taken over all points $x$ of codimension one in
 $\cV\times S$, and
 $\kappa(x)$ is the residue field of $x$.
Hence we can use the above $S$ and $\tilde{e}$ to construct the
 specialization of $e_1$.

Now we define the subscheme $U$ of $\cV \times S$ as follows:
\begin{equation*}
  U=\cV\times S \setminus (D_{+}(x+y) \cup D_{+}(x+\zeta y)),
\end{equation*}
where $D_{+}(f)$ is the non-vanishing locus of a homogeneous polynomial
 $f$. Explicitly,
 if we put
\begin{equation*}
R:=\dfrac{k[c^{\pm},d^{\pm}]\left[\dfrac{x}{x+y}, \dfrac{y}{x+y},
		      \dfrac{z}{x+y}, \dfrac{t}{x+y},
		      \dfrac{x+y}{x+\zeta
		      y}\right]}{\left(\dfrac{x^3+y^3+cz^3+dt^3}{(x+y)^3}\right)},
\end{equation*}
then $U=\Spec R$. We have
\begin{equation*}
\dfrac{d}{c},\quad \dfrac{x+\zeta y}{x+y} \in \Gamma(U,\cO_U)^{\ast}
\end{equation*}
and hence
\begin{equation*}
 \left\{\dfrac{d}{c}, \dfrac{x+\zeta y}{x+y}\right\}_3 \in H^2(U,\mu_3), 
\end{equation*}
where
\begin{equation*}
 \{\cdot,\cdot\}_3 \colon \Gamma(U,\cO_U)^{\ast} \otimes \Gamma(U,\cO_U)^{\ast} \to
  H^1(U,\mu_3) \otimes H^1(U,\mu_3) \stackrel{\cup}{\to}
  H^2(U,\mu_3^{\otimes 2})
  \cong H^2(U,\mu_3)
\end{equation*}
is norm residue map defined similarly as in the field case. 
Take any $P=(c_0,d_0) \in k^{\ast} \times k^{\ast}$ such that $c_0, d_0,
 c_0d_0$ and $c_0/d_0 \notin (k^{\ast})^3$ and put $R_P:=R/(c-c_0,d-d_0)$.
We have the canonical morphism $P^{\ast}\colon U \to U_P:=\Spec R_P$ and the
 following commutative diagram:
\begin{equation*}
\xymatrix{
\Br(U_P) & \Br(U) \ar[l]_{P^{\ast}} \ar[r] & \Br(F(V)) \\
\Br(V_P) \ar@{^(->}[u]_{\res^{V_P}_{U_P}} & \Br(\cV \times S)
 \ar[l]_{P^{\ast}} \ar@{^(->}[u]_{\res_U^{\cV\times S}}
 \ar[r] & \Br(V) \ar@{^(->}[u]_{\res_{F(V)}^V} \\
}
\end{equation*}
Therefore we get
\begin{align*}
 \res^{V_P}_{U_P}(\sp(e_1;P))
&=\res^{V_P}_{U_P}(P^{\ast}(\tilde{e}))\\
&=P^{\ast}(\res^{\cV\times S}_U(\tilde{e}))\\
&=P^{\ast}\left(\left\{\dfrac{d}{c},\dfrac{x+\zeta
 y}{x+y}\right\}_3\right)\\
&=\left\{\dfrac{d_0}{c_0},\dfrac{x+\zeta y}{x+y}\right\}_3
\end{align*}
and complete the proof.
\end{proof}
\section{The case $x^3+by^3+cz^3+dt^3=0$}\label{sec:1bcd}

In this section, let $k$ be a field containing a primitive cubic root
$\zeta$ of unity, $\lambda$, $\mu$ and $\nu$ indeterminates,
$\cO_F=k[\lambda,\mu,\nu]$, $F=k(\lambda,\mu,\nu)$ and
\begin{equation*}
 \cV=\Proj(\cO_F[x,y,z,t]/(x^3+\lambda y^3+\mu z^3+\lambda\mu\nu t^3).
\end{equation*} 
Put $V=\cV \times_{\A_k^3} \Spec F$. For all $P \in (\G_{m,k})^3(k)$,
$V_P=\cV \times_{\A_k^3}\Spec k(P)$ is smooth over $k$. In this section,
we are mainly devoted to proving the following:
\begin{thm}\label{thm:vanishing_of_Brauer_group}
\begin{equation*} 
\Br(V)/\Br(F)=0.
\end{equation*} 
\end{thm}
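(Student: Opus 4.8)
Since $\Br(\bar{V})=0$, the fundamental sequence~\eqref{eq:fund_exact_seq_rational} over $F$ reads
\[
0\to\Br(V)/\Br(F)\to H^1(F,\Pic(\bar{V}))\stackrel{d^{1,1}}{\to}H^3(F,\bar{F}^{\ast}),
\]
so the plan is to prove that $d^{1,1}$ is injective. Because $\lambda,\mu,\nu$ are algebraically independent over $k$, none of the quantities occurring in Proposition~\ref{prop:ctks} is a cube in $F^{\ast}$, hence $H^1(F,\Pic(\bar{V}))\cong\Z/3\Z$; moreover this group is already computed by $H^1(k'/F,\Pic(V_{k'}))$ with $k'=F(\alpha,\gamma)$ (\cite{colliot1987arithmetique}), and Proposition~\ref{prop:explicit_cocycle}(2) supplies an explicit generating cocycle $\phi$. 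Everything therefore reduces to showing $d^{1,1}([\phi])\neq 0$ in $H^3(F,\bar{F}^{\ast})$.

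Recall that $d^{1,1}$ is, up to sign, the iterated connecting homomorphism of the four-term exact sequence of $G_F$-modules
\[
0\to\bar{F}^{\ast}\to\bar{F}(V)^{\ast}\to\Div(\bar{V})\to\Pic(\bar{V})\to 0,
\]
spliced at $\bar{F}(V)^{\ast}/\bar{F}^{\ast}$; the proof of Theorem~\ref{thm:11cd} already carries out the first of the two connecting maps. Since all the data attached to $[\phi]$ are defined over $k'$, the class $d^{1,1}([\phi])$ is the inflation of a class computed over $k'$, and I would compute it by the following chase. First lift $\phi$ along $\Div(V_{k'})\to\Pic(V_{k'})$ to a cochain $\tilde{\phi}$ valued in the permutation module $\cD$ of Lemma~\ref{lem:exact_seq_of_Div_and_Pic}, using the divisors $L(i)$; the coboundary $d\tilde{\phi}$ then has values in $\cD_{0}$ by that lemma. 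Next, using that $\cD_{0}$ is the image under $\div$ of the $\Gal(k'/F)$-module generated by $f_{1},\dots,f_{5}$, lift $d\tilde{\phi}$ to a $2$-cochain $\Phi$ valued in the $\Gal(k'/F)$-submodule $M\subset k'(V)^{\ast}$ generated by the $f_{i}$ together with $k'^{\ast}$. Finally $d\Phi$ is a $3$-cocycle with values in $M\cap k'^{\ast}=k'^{\ast}$, and its inflation to $H^3(F,\bar{F}^{\ast})$ is $d^{1,1}([\phi])$. The bookkeeping — the action of $\Gal(k'/F)$ cyclically permuting $L(0),L(1),L(2)$, sending $L(i)\mapsto L'(i)\mapsto L''(i)$, and scaling $\alpha$ and $\beta=\alpha\gamma$ by cube roots of unity, together with the "hidden constant" relations such as $f_{1}f_{2}f_{1}''=-\mu\,f_{3}f_{4}f_{5}$ on $V$ — is the degree-one-higher version of the computation in the proof of Theorem~\ref{thm:11cd}, where the $2$-cocycle $(f_{2}/f_{1})^{a(i,j)}$ is produced.

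I expect the outcome to be, up to sign and the obvious rewritings, the triple cup product $(\lambda)\cup(\mu)\cup(\nu)\in H^3(F,\mu_{3}^{\otimes 3})$ (with $(a)\in H^1(F,\mu_{3})$ the Kummer class of $a$), viewed in $H^3(F,\bar{F}^{\ast})$ via $\mu_{3}\hookrightarrow\G_{m}$; this is consistent with Theorem~\ref{thm:11cd}, where the relevant surface has $\lambda=1$ and the symbol vanishes. To see that this class is nonzero in $H^3(F,\bar{F}^{\ast})$ — and not merely in $H^3(F,\mu_{3})$ — I would take residues along the coordinate divisors of $\A^3_{k}=\Spec k[\lambda,\mu,\nu]$: the residue at $\nu=0$ sends it to $\{\lambda,\mu\}_{3}\in\Br(k(\lambda,\mu))$, whose residue at $\mu=0$ is the class of $\lambda$ in $k(\lambda)^{\ast}/(k(\lambda)^{\ast})^{3}$, nonzero since $\lambda$ is transcendental over $k$ and $\zeta\in k$. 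As the iterated residue of a nonzero Galois symbol does not vanish, and is insensitive to the difference between $\mu_{3}$- and $\G_{m}$-coefficients, this forces $d^{1,1}([\phi])\neq 0$, whence $\Br(V)/\Br(F)=0$. (This is precisely where the algebraic independence of $\lambda,\mu,\nu$ is used; for the specialized surfaces it is this independence that the density hypothesis in Corollary~\ref{cor:2nd} replaces.)

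The main obstacle is the cocycle chase of the second paragraph: producing the explicit $3$-cocycle requires tracking the Galois action on the twenty-seven lines and on the auxiliary functions $f_{i}$ without error, and once it is in hand one must certify that its class survives inflation to $H^3(F,\bar{F}^{\ast})$ rather than only to $H^3(F,\mu_{3})$ — for which the residue computation above, i.e.\ a genuine input about $H^3$ of a rational function field, is needed rather than a formal argument.
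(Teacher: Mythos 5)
Your plan follows the paper's proof in all of its essential moves: reduce to showing $d^{1,1}([\phi])\neq 0$ for the generator $\phi$ of Proposition \ref{prop:explicit_cocycle}(2); realize $d^{1,1}=\delta\circ\partial$ as the iterated connecting map of the spliced sequence built from Lemma \ref{lem:exact_seq_of_Div_and_Pic} and $0\to F'^{\ast}\to\div^{-1}(\cD_0)\to\cD_0\to 0$ (the paper cites \cite{kresch2008effectivity}, Proposition 6.1, for exactly this); and certify non-vanishing in $H^3(F,\bar{F}^{\ast})$ by a double residue along the coordinate divisors $\{\mu=0\}$ and $\{\nu=0\}$ of $\A^3_k$, ending at a nonzero Kummer class of $\lambda$ over $k(\lambda)$ --- this is literally Steps 1, 3 and 4 of the paper. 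The one point where you genuinely diverge is the middle: you propose to shortcut the explicit $3$-cocycle by recognizing it as the symbol $(\lambda)\cup(\mu)\cup(\nu)$, which would place the class in $H^3(F,\mu_3)$ for free and make the residue computation a one-liner. The paper never establishes such a closed form (and you state the identification only as an expectation, which is the crux of the whole argument); instead it carries the raw cocycle $\delta\partial\phi$, whose values are powers of $-\mu$ indexed by the $(\lambda,\nu)$-Kummer characters --- a shape entirely consistent with your conjecture --- and handles the passage from $\G_m$- to $\mu_3$-coefficients, the very step you single out as the main obstacle, via Proposition \ref{prop:h3mu3}: an explicit $2$-cochain $\psi$ over $F''=F'(\alpha')$ with $d\psi=(\delta\partial\phi)^3$, showing the class is inflated from $H^3(F''/F,\mu_3)$, after which the residue diagrams of Steps 3--4 (Lemmas \ref{lem:rPhi} and \ref{lem:rPsi}) do what your ``insensitive to the difference between $\mu_3$- and $\G_m$-coefficients'' remark asks for. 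So: same proof in outline, with the single technical pivot executed by an explicit cobounding cochain in the paper versus a conjectured symbol identity in your write-up; verifying the latter would cost essentially the same cocycle bookkeeping you hoped to avoid, but would yield the more memorable statement $d^{1,1}([\phi])=\pm\{\lambda,\mu,\nu\}$.
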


As a corollary of this result, we obtain the non-existence of uniform
generators. We define the set $\cP_k$ to be
\begin{equation*}
\{P \in (\G_{m,k})^3(k) \mid \Br(V_P)/\Br(k) \cong \Z/3\Z\}.
\end{equation*}
Here we have the following
\begin{prop}\label{prop:C(k)}
The following conditions are equivalent$:$

{\rm(1)} $\cP_k$ is Zariski dense in $(\G_{m,k})^3;$

{\rm(2)} $\cP_k$ is non-empty$;$

{\rm(3)} $\dim_{\F_3}k^{\ast}/(k^{\ast})^3 \geq 2.$
\end{prop}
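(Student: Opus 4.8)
The plan is to establish the cycle of implications $(1)\Rightarrow(2)\Rightarrow(3)\Rightarrow(1)$. The implication $(1)\Rightarrow(2)$ is immediate, since $(\G_{m,k})^3$ is integral and non-empty, so no Zariski-dense subset of it can be empty.

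For $(2)\Rightarrow(3)$ I would argue by contraposition, assuming $\dim_{\F_3}k^{\ast}/(k^{\ast})^3\le 1$ and showing that $\cP_k=\emptyset$. Fix $P=(b_0,c_0,d_0)$ and attach to $V_P$ the usual invariants $\lambda=b_0$, $\mu=c_0$, $\nu=d_0/(b_0c_0)$, so that by the sequence~(\ref{eq:fund_exact_seq_rational}) the group $\Br(V_P)/\Br(k)$ embeds into $H^1(k,\Pic(\bar{V_P}))$, which by Proposition~\ref{prop:ctks} is $0$, $\Z/3\Z$ or $(\Z/3\Z)^2$. The first ingredient is a lemma valid over any base field: if exactly three of $\lambda,\mu,\lambda/\mu,\lambda\mu\nu,\lambda\nu,\mu\nu$ are cubes --- the $(\Z/3\Z)^2$ case --- then one of $\lambda,\mu,\lambda/\mu$ is a cube; indeed, otherwise the three cubes among the six would all lie in $\{\lambda\mu\nu,\lambda\nu,\mu\nu\}$, forcing $\lambda\mu\nu,\lambda\nu,\mu\nu$ all to be cubes and hence also their ratio $\mu=(\lambda\mu\nu)/(\lambda\nu)$, a contradiction. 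But when one of $\lambda,\mu,\lambda/\mu$ is a cube the surface $V_P$ has an obvious $k$-rational point (namely $(\ell:-1:0:0)$, $(m:0:-1:0)$ or $(0:1:-q:0)$ according as $b_0=\ell^{3}$, $c_0=m^{3}$ or $b_0/c_0=q^{3}$), so Lemma~\ref{lem:isom_condition} gives $\Br(V_P)/\Br(k)\cong H^1(k,\Pic(\bar{V_P}))\cong(\Z/3\Z)^2$; and when $H^1(k,\Pic(\bar{V_P}))=0$ the sequence~(\ref{eq:fund_exact_seq_rational}) gives $\Br(V_P)/\Br(k)=0$. Therefore $\Br(V_P)/\Br(k)\cong\Z/3\Z$ is possible only in the ``otherwise'' case of Proposition~\ref{prop:ctks}, i.e.\ when $H^1(k,\Pic(\bar{V_P}))\cong\Z/3\Z$. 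Finally I would check that this case is vacuous when $|k^{\ast}/(k^{\ast})^3|\le 3$: failure of the ``$0$'' case means the class of $\nu$ in $k^{\ast}/(k^{\ast})^3$ avoids $0$ and the classes of $\lambda$ and $\mu$, so by pigeonhole two of the three classes $0$, $\bar\lambda$, $\bar\mu$ coincide, i.e.\ one of $\lambda,\mu,\lambda/\mu$ is a cube, and then a direct count shows that exactly three of the six elements above are cubes --- the $(\Z/3\Z)^2$ case, not the ``otherwise'' one. Hence $\cP_k=\emptyset$.

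For $(3)\Rightarrow(1)$ I would first exhibit a single point of $\cP_k$. As $\dim_{\F_3}k^{\ast}/(k^{\ast})^3\ge 2$, pick $c,d\in k^{\ast}$ whose classes in $k^{\ast}/(k^{\ast})^3$ are $\F_3$-linearly independent; then $c,d,cd,d/c\notin(k^{\ast})^3$, so Theorem~\ref{thm:1st} applied to $x^3+y^3+cz^3+dt^3=0$ yields $\Br(V_{P_0})/\Br(k)\cong\Z/3\Z$ for $P_0=(1,c,d)$, that is, $P_0\in\cP_k$. Next, for $(u,v,w)\in(k^{\ast})^{3}$ the surfaces $V_{(b_0,c_0,d_0)}$ and $V_{(b_0u^3,c_0v^3,d_0w^3)}$ are $k$-isomorphic, by rescaling the coordinates $y,z,t$; hence $\cP_k$ is stable under translation by the subgroup $T=\{(u^3,v^3,w^3):u,v,w\in k^{\ast}\}$ of $(\G_{m,k})^3(k)$, and in particular $\cP_k\supseteq P_0\cdot T$. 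Finally $\{u^{3}:u\in k^{\ast}\}$ is an infinite subset of the curve $\G_{m,k}$ --- it contains the cubes of all positive integers, which lie in $k^{\ast}$ since $k$ has characteristic zero --- hence Zariski-dense in $\G_{m,k}$; a product of Zariski-dense sets is Zariski-dense in the product, so $T$ is Zariski-dense in $(\G_{m,k})^3$, and since translation by $P_0$ is an automorphism of $(\G_{m,k})^3$ the translate $P_0\cdot T$ is Zariski-dense as well. Therefore $\cP_k$ is Zariski-dense in $(\G_{m,k})^3$.

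The step I expect to be the main obstacle is $(2)\Rightarrow(3)$: it requires combining the ``rational point'' lemma that pins down $\Br(V_P)/\Br(k)$ in the $(\Z/3\Z)^2$ case (via Lemma~\ref{lem:isom_condition}) with an honest bookkeeping, under $|k^{\ast}/(k^{\ast})^3|\le 3$, of how many of $\lambda,\mu,\lambda/\mu,\lambda\mu\nu,\lambda\nu,\mu\nu$ can simultaneously be cubes once the ``$0$'' case has been excluded. By contrast $(1)\Rightarrow(2)$ is formal and $(3)\Rightarrow(1)$ is soft, the only external input being Theorem~\ref{thm:1st}.
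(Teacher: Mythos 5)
Your proof is correct and follows essentially the same route as the paper: the same cycle of implications, with $(2)\Rightarrow(3)$ proved by contraposition using Proposition \ref{prop:ctks} together with the observation that a rational point plus Lemma \ref{lem:isom_condition} pins down $\Br(V_P)/\Br(k)$, and $(3)\Rightarrow(1)$ proved by exhibiting the Zariski-dense subset $(k^{\ast})^3\times c(k^{\ast})^3\times d(k^{\ast})^3$ of $\cP_k$. The only cosmetic differences are that you certify membership of this subset in $\cP_k$ via Theorem \ref{thm:1st} and the cube-rescaling symmetry rather than by direct pointwise verification, and that your case analysis for $(2)\Rightarrow(3)$ is organized around classes in $k^{\ast}/(k^{\ast})^3$ instead of the paper's list of five normal forms.
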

We define $C(k)$ to be the above equivalent conditions. We
easily see that the condition (3) holds for various fields, for example,
\begin{itemize}
\item any finitely generated field over $\Q(\zeta)$ or $\Q_p(\zeta)$ for
      any prime number $p$;
\item a function field of any variety over $\mathbb{C}$ or
	   $\mathbb{R}$ of dimension $\geq 1$.
\end{itemize}
Thus this condition $C(k)$ is mild and reasonable.

Now we can state the following non-existence result:
\begin{cor}\label{cor:nonexistance_of_generator}
Let $k$ and $V$ be as above. Assume moreover
$\dim_{F_3}k^{\ast}/(k^{\ast})^3 \geq 2$. Then there is no element $e
\in \Br(V)$ satisfying the following property$:$
\begin{quote}
 there exists a dense open subset $W \subset (\G_{m,k})^3$ such that
 $\sp(e;\cdot)$ is defined on $W(k) \cap \cP_k$ and for all $P \in W(k)
 \cap \cP_k, \sp(e;P)$ is a generator of $\Br(V_P)/\Br(k).$
\end{quote}
\end{cor}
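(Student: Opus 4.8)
The plan is to argue by contradiction, using Theorem \ref{thm:vanishing_of_Brauer_group} as the crucial input together with the fact (Proposition \ref{prop:C(k)}) that the hypothesis $\dim_{\F_3}k^{\ast}/(k^{\ast})^3 \geq 2$ makes $\cP_k$ Zariski dense in $(\G_{m,k})^3$. Suppose such an $e \in \Br(V)$ exists, with $W \subset (\G_{m,k})^3$ a dense open subset on which $\sp(e;\cdot)$ is defined and produces a generator of $\Br(V_P)/\Br(k)$ for every $P \in W(k)\cap\cP_k$. By the construction of specialization in \S\ref{sec:Brauer_group}, after shrinking we may choose a nonempty affine open $S\subset \A^3_k$ with $S(k)\subset W(k)$ suitable, and a lift $\tilde e \in \Br(\cV\times_{\A^3_k}S)$ restricting to $e$ on $V$. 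By Theorem \ref{thm:vanishing_of_Brauer_group}, $e$ lies in $\pi_F^{\ast}\Br(F)$, say $e = \pi_F^{\ast}(b)$ for some $b \in \Br(F)$; shrinking $S$ further, we may assume $b$ comes from some $b_S \in \Br(S)$, so that $\tilde e$ and $\pi_S^{\ast}(b_S)$ agree on the generic fibre, hence (by the injectivity of $\res^S_{\Spec F}$, which is the regularity statement already invoked in \S\ref{sec:Brauer_group}) agree as elements of $\Br(\cV\times_{\A^3_k}S)$ after one more shrinking.

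Next I would specialize. For any $P \in S(k)$, functoriality of pullback gives $\sp(e;P) = P^{\ast}\tilde e = P^{\ast}\pi_S^{\ast}(b_S) = \pi_0^{\ast}(P^{\ast}b_S)$, where $\pi_0\colon V_P \to \Spec k$ is the structure morphism. Thus $\sp(e;P) \in \pi_0^{\ast}\Br(k)$ for every $P \in S(k)$, i.e.\ its image in $\Br(V_P)/\Br(k)$ is zero. On the other hand, since $\cP_k$ is Zariski dense in $(\G_{m,k})^3$ and $W\cap S$ is a nonempty (hence dense) open subset, the intersection $(W\cap S)(k)\cap\cP_k$ is nonempty — pick any $P_0$ in it. By hypothesis $\sp(e;P_0)$ is a generator of $\Br(V_{P_0})/\Br(k)\cong \Z/3\Z$, in particular nonzero. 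This contradicts the previous sentence, and the corollary follows.

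The one point requiring care — and the main obstacle — is the bookkeeping of the successive open shrinkings of $S$: one must ensure that a single nonempty affine open $S$ simultaneously supports the lift $\tilde e$, the descended class $b_S$, the equality $\tilde e = \pi_S^{\ast}b_S$, smoothness of $\cV\times_{\A^3_k}S$ over $S$, and the condition $S(k) \subset W(k)$ (or more precisely that $(W\cap S)(k)\cap\cP_k$ is still nonempty, which is fine since a finite intersection of dense opens in an irreducible scheme is dense and $\cP_k$ is dense by Proposition \ref{prop:C(k)}). Each individual shrinking is legitimate by the same citations used to set up specialization, namely \cite{grothendieck1967ega4_4}(17.7.8) and \cite{milne1980etale}(III, 1.16) for spreading out Brauer classes, together with the regularity of $\cV\times_{\A^3_k}S$ which forces $\res$ maps between Brauer groups to be injective; the only subtlety is that there are finitely many such conditions, so a common $S$ exists. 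With that in hand the contradiction is immediate, so I do not expect any genuine difficulty beyond this routine spreading-out argument.
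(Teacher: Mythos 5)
Your proposal is correct and takes essentially the same route as the paper: both deduce from Theorem \ref{thm:vanishing_of_Brauer_group} that $e=\pi_F^{\ast}e'$ for some $e'\in\Br(F)$, spread $e'$ out to some $\Br(S)$, use the density of $\cP_k$ (Proposition \ref{prop:C(k)}) to find $P\in (S\cap W)(k)\cap\cP_k$, and conclude $\sp(e;P)\in\pi_P^{\ast}\Br(k)$, contradicting that it generates $\Br(V_P)/\Br(k)\cong\Z/3\Z$. The bookkeeping of shrinkings you flag is handled in the paper exactly as you suggest, via the injectivity of $\res^S_{\Spec F}$ coming from regularity, which makes $\sp(e;P)$ independent of the chosen lift.
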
  
\begin{rem}
 Let $V$ be a cubic surface over $k$ of the form
 $x^3+by^3+cz^3+dt^3=0$. Assume $H^1(k,\Pic(\bar{V}))=\Z/3\Z$ and
 $V(k)=\emptyset$. We note some known results of the structure and
 generators of $\Br(V)/\Br(k)$.
 \begin{enumerate}
  \item By a theorem of Merkurjev-Suslin \cite{merkurjev1982}, we always
	write a generator of $\Br(V)/\Br(k)$ as a sum of norm residue
	symbols.
  \item If $\cd(k) \leq 2$, $\Br(V)/\Br(k)$ is isomorphic to
	$\Z/3\Z$. Its symbolic generator is not ``uniform'' by Corollary
	\ref{cor:nonexistance_of_generator} and we do not know this can
	be written by \textit{one} symbol $\{f,g\}_3$ for some $f$, $g
	\in k(V)^{\ast}$.  However, here is a partial result. Let $k$ be
	a field satisfying the following condition:
	\begin{quote}
	 For any cubic extension $L$ of $k$, the restriction $\Br(k) \to
	 \Br(L)$ is surjective.
	\end{quote}
	Some examples of $k$ are 
	\begin{itemize}
	 \item a field $k$ with $\cd(k) \leq 1$.
	 \item a local field $k$.
	\end{itemize}
	Then we find that a generator can be
       taken as $\{d/bc,f\}_3$ for some $f \in k(V)^{\ast}$. 
  \item If $\cd(k) \geq 3$, it is difficult to determine whether
	$\Br(V)/\Br(k)$ is isomorphic to $0$ or $\Z/3\Z$. Our $V/F$ has
	$\cd(F)\geq 3$ and $V(F) =\emptyset$ and
	$H^1(F,\Pic(\bar{V}))\neq 0$. As far as we know, Our result
	would be the first example of computation of the Brauer
	group of such varieties.
 \end{enumerate}
\end{rem}  
\begin{proof}[Proof of Theorem {\rm\ref{thm:vanishing_of_Brauer_group}}.]
We recall some notations in \S \ref{sec:diagonal_cubic_surface}. We
define
\begin{equation*}
\alpha=\sqrt[3]{\lambda}, \quad \gamma=\sqrt[3]{\nu}, \quad
 \alpha'=\sqrt[3]{\mu},\quad \beta=\alpha\gamma.
\end{equation*}
Moreover we put
\begin{equation*}
 F'=F(\alpha,\gamma),\quad F''=F'(\alpha')=F(\alpha,\gamma,\alpha').
\end{equation*}
We have the following exact sequence:
\begin{equation*}
 0 \to \Br(V)/\Br(F) \to H^1(F,\Pic(\bar{V})) \stackrel{d^{1.1}}{\to} H^3(F,\bar{F}^{\ast}).
\end{equation*}
and we know $H^1(F,\Pic(\bar{V}))\cong H^1(F'/F,\Pic(V_{F'}))\cong
 \Z/3\Z$. Therefore, to prove the theorem, it suffices to show the image
 of $\phi \in H^1(F'/F,\Pic(V_{F'}))$ in Proposition
 \ref{prop:explicit_cocycle} (2) does not vanish in
 $H^3(F,\bar{F}^{\ast})$.

Before proving this claim, we sketch an outline of its proof. The proof
 consists of 4 steps. In Step 1, we compute the image of $\phi$ under
 the differential
\begin{equation*}
 d^{1,1}\colon H^1(F'/F,\Pic(V_{F'})) \to H^3(F'/F,(F')^{\ast})
\end{equation*} 
explicitly. Since the inflation $i^{F'}_{\bar{F}}:H^3(F'/F,F'^{\ast})
 \to H^3(F,\bar{F}^{\ast})$ does not necessarily injective, if we prove
 that $d^{1,1}(\phi)\neq 0$ in $H^3(F'/F,F'^{\ast})$, this is
 insufficient to prove the theorem. In Step 2, we consider
 $d^{1,1}(\phi)$ as an element of $H^3(F''/F,\mu_3)$. In Step 3, by
 computing the residue of $d^{1,1}(\phi)$ along a certain prime divisor
 $D$ in $\A_k^3$ and replacing the base field $k$ with the field adding
 all roots of unity, we reduce the proof to showing that a certain
 cocycle induced by $\phi$ is nontrivial in $H^2(k(D),\mu_3)$, where
 $k(D)$ is the function field of $D$. Finally, in Step 4, we again
 compute the residue of the cocycle in Step 3 along a certain prime
 divisor $D'$ in $D$ and check this is nonzero. These steps complete the
 proof of the theorem.

{\bf Step 1.} Let  
\begin{align*}
&\partial\colon H^1(F'/F,\Pic(V_{F'})) \to H^2(F'/F,\cD_0), \\
&\delta\colon H^2(F'/F,\cD_0) \to H^3(F'/F, F'^{\ast})
\end{align*}
be connecting homomorphisms induced by the exact sequence in Lemma
\ref{lem:exact_seq_of_Div_and_Pic} and 
\[
 0 \to F'^{\ast} \to
\div^{-1}(\cD_0) \to \cD_0 \to 0.
\] 
We have
\[
 d^{1,1}=\delta \circ \partial \colon H^1(F'/F,\Pic(V_{F'})) \to H^3(F'/F,F'^{\ast})
\]
by \cite{kresch2008effectivity}, Proposition 6.1. First we compute the
 cocycle $\partial\phi \in Z^2(F'/F,\cD_0)$. Let $\cD$ and $\cD_0$ be as
 in \S \ref{sec:diagonal_cubic_surface}. We take
\begin{equation*}
0,\quad \ L(0)-L(2),\quad L(0)-L(1) \in \cD
\end{equation*}
as lifts of $0$, $[L(0)]-[L(2)]$ and $[L(0)]-[L(1)] \in \Pic(V_{F'})$ respectively, and note
 that 
\begin{equation*}
\div\dfrac{x+\zeta^2\alpha
 y}{x}=\div\left(-\dfrac{\mu f_3f_4f_5}{f_1f_2}\right) \in \cD_0.
\end{equation*}　
From the construction of the map $\partial$, we get the following equations
\begin{alignat*}{3}
 &\partial\phi(1,1)=0, & \ 
 &\partial\phi(1,s)=0,&\  
 &\partial\phi(1,s^2)=0,\\
 &\partial\phi(s,1)=0, & \ 
 &\partial\phi(s,s)=\div\dfrac{z+\zeta\beta t}{x+\zeta^2\alpha y},& 
 &\partial\phi(s,s^2)=\div\dfrac{x+\alpha y}{z+\zeta^2\beta t},\\
 &\partial\phi(s^2,1)=0, &
 &\partial\phi(s^2,s)=\div\dfrac{x+\alpha y}{z+\zeta\beta t},&
 &\partial\phi(s^2,s^2)=\div\dfrac{z+\zeta^2\beta t}{x+\zeta\alpha y},\\
 &\partial\phi(t,1)=0, &
 &\partial\phi(t,s)=0,&
 &\partial\phi(t,s^2)=0,\\
 &\partial\phi(st,1)=0, &
 &\partial\phi(st,s)=\div\dfrac{z+\zeta^2\beta t}{x+\alpha y},&
 &\partial\phi(st,s^2)=\div\dfrac{x+\zeta\alpha y}{z+\beta t},\\
 &\partial\phi(s^2t,1)=0, &
 &\partial\phi(s^2t,s)=\div\dfrac{x+\zeta\alpha y}{z+\zeta^2\beta t},&
 &\partial\phi(s^2t,s^2)=\div\dfrac{z+\beta t}{x+\zeta^2\alpha y},\\
 &\partial\phi(t^2,1)=0, & 
 &\partial\phi(t^2,s)=0,&
 &\partial\phi(t^2,s^2)=0,\\
 &\partial\phi(st^2,1)=0, &
 &\partial\phi(st^2,s)=\div\dfrac{z+\beta t}{x+\zeta\alpha y},&
 &\partial\phi(st^2,s^2)=\div\dfrac{x+\zeta^2\alpha y}{z+\zeta\beta t},\\
 &\partial\phi(s^2t^2,1)=0, &
 &\partial\phi(s^2t^2,s)=\div\dfrac{x+\zeta^2\alpha y}{z+\beta t},&
 &\partial\phi(s^2t^2,s^2)=\div\dfrac{z+\zeta\beta t}{x+\alpha y},
\end{alignat*}
\begin{equation*}
\partial\phi(s^{i_1}t^{j_1},s^{i_2}t^{j_2})=
  \partial\phi(s^{i_1}t^{j_1},s^{i_2-j_2}),
\end{equation*}
where the indices $i_1$, $i_2$, $j_1$ and $j_2$ take on any values in
$\{0, 1, 2\}$.

Sending this cocycle under $\delta$, we get $\delta\partial\phi$ in
 $Z^3(F'/F,F'^{\ast})$. If we take 
\begin{equation*}
1,\quad \dfrac{x+\zeta^i\alpha y}{z+\zeta^j\beta t}, \quad
 \dfrac{z+\zeta^j\beta t}{x+\zeta^i\alpha y} \in \div^{-1}(\cD_0) 
\end{equation*}
as lifts of $0$, $\div\dfrac{x+\zeta^i\alpha y}{z+\zeta^j\beta t}$ and
$\div\dfrac{z+\zeta^j\beta t}{x+\zeta^i\alpha y} \in \cD_0$ respectively,
this cocycle is determined by the following equations: 
\begin{alignat*}{3}
 &\delta\partial\phi(t^{j_1},s^{i_2}t^{j_2},s^{i_3}t^{j_3})=1,\\
 &\delta\partial\phi(s^{i_1}t^{j_1},1,s^{i_3}t^{j_3})=1,\\
 &\delta\partial\phi(s^{i_1}t^{j_1},s^{i_2}t^{j_2},1)=1,\\
 &\delta\partial\phi(st^{j_1},s,s)=1, & \ 
 &\delta\partial\phi(st^{j_1},s,s^2)=-\mu, \\ 
 &\delta\partial\phi(st^{j_1},s^2,s)=1, & \ 
 &\delta\partial\phi(st^{j_1},s^2,s^2)=-\mu^{-1},\\
 &\delta\partial\phi(st^{j_1},t,s)=1, & \ 
 &\delta\partial\phi(st^{j_1},t,s^2)=-\mu^{-1}, \\ 
 &\delta\partial\phi(st^{j_1},st,s)=-\mu^{-1}, & \ 
 &\delta\partial\phi(st^{j_1},st,s^2)=-\mu,\\
 &\delta\partial\phi(st^{j_1},s^2t,s)=-\mu, & \ 
 &\delta\partial\phi(st^{j_1},s^2t,s^2)=1, \\ 
 &\delta\partial\phi(st^{j_1},t^2,s)=-\mu, & \ 
 &\delta\partial\phi(st^{j_1},t^2,s^2)=1,\\
 &\delta\partial\phi(st^{j_1},st^2,s)=-\mu^{-1}, & \ 
 &\delta\partial\phi(st^{j_1},st^2,s^2)=1, \\ 
 &\delta\partial\phi(st^{j_1},s^2t^2,s)=1, & \ 
 &\delta\partial\phi(st^{j_1},s^2t^2,s^2)=1,\\
 &\delta\partial\phi(s^2t^{j_1},s,s)=-\mu^{-1}, & \ 
 &\delta\partial\phi(s^2t^{j_1},s,s^2)=1, \\ 
 &\delta\partial\phi(s^2t^{j_1},s^2,s)=-\mu, & \ 
 &\delta\partial\phi(s^2t^{j_1},s^2,s^2)=1,\\
 &\delta\partial\phi(s^2t^{j_1},t,s)=1, & \ 
 &\delta\partial\phi(s^2t^{j_1},t,s^2)=-\mu, \\ 
 &\delta\partial\phi(s^2t^{j_1},st,s)=1, & \ 
 &\delta\partial\phi(s^2t^{j_1},st,s^2)=1,\\
 &\delta\partial\phi(s^2t^{j_1},s^2t,s)=1, & \ 
 &\delta\partial\phi(s^2t^{j_1},s^2t,s^2)=-\mu^{-1}, \\ 
 &\delta\partial\phi(s^2t^{j_1},t^2,s)=-\mu^{-1}, & \ 
 &\delta\partial\phi(s^2t^{j_1},t^2,s^2)=1,\\
 &\delta\partial\phi(s^2t^{j_1},st^2,s)=1, & \ 
 &\delta\partial\phi(s^2t^{j_1},st^2,s^2)=-\mu, \\ 
 &\delta\partial\phi(s^2t^{j_1},s^2t^2,s)=-\mu, & \ 
 &\delta\partial\phi(s^2t^{j_1},s^2t^2,s^2)=-\mu^{-1},
\end{alignat*}
\begin{equation*}
\delta\partial\phi(s^{i_1}t^{j_1},s^{i_2}t^{j_2},s^{i_3}t^{j_3})=\delta\partial\phi(s^{i_1}t^{j_1},s^{i_2}t^{j_2},s^{i_3-j_3}),
\end{equation*}
where the indices $i_1$, $i_2$, $i_3$, $j_1$, $j_2$ and  $j_3$ take on
 any values in $\{0, 1, 2\}$.

{\bf Step 2.} Let $i^{F'}_{\bar{F}}$ be the inflation
\begin{equation*}
i^{F'}_{\bar{F}}\colon H^3(F'/F,F'^{\ast}) \to H^3(F,\bar{F}^{\ast}).
\end{equation*}
The class $i^{F'}_{\bar{F}}\delta\partial[\phi$] in
 $H^3(F,\bar{F}^{\ast})$ is a $3$-torsion element, hence
 by the Kummer sequence, $i^{F'}_{\bar{F}}\delta\partial[\phi]$ comes from
 $H^3(F,\mu_3)$.
Now we find a finite extension $K$ over $F$ such that
 $i^{F'}_{\bar{F}}\delta\partial[\phi]$ comes from $H^3(K/F,\mu_3)$. In
 fact, we can take $K=F''$:
\begin{prop}\label{prop:h3mu3}
 The class $i^{F'}_{\bar{F}}\delta\partial[\phi] \in
 H^3(\bar{F}/F,\bar{F}^{\ast})$ comes from $H^3(F''/F,\mu_3).$
\end{prop}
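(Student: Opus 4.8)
The plan is to show that the cohomology class $i^{F'}_{\bar F}\delta\partial[\phi]$, a priori living only in $H^3(F,\bar F^{\ast})$, can be represented by a cocycle with values in $\mu_3 \subset (F'')^{\ast}$, and is inflated from the finite quotient $\Gal(F''/F)$. First I would recall from the Kummer sequence $1 \to \mu_3 \to \bar F^{\ast} \xrightarrow{3} \bar F^{\ast} \to 1$ and the fact that $\delta\partial[\phi]$ is $3$-torsion (it is killed by $3$ since $H^1(F'/F,\Pic(V_{F'})) \cong \Z/3\Z$, and $d^{1,1}$ is a homomorphism) that there is a class in $H^3(F,\mu_3)$ mapping to $i^{F'}_{\bar F}\delta\partial[\phi]$; one must be slightly careful because the map $H^3(F,\mu_3)\to \lsub{3}{H^3(F,\bar F^{\ast})}$ is only surjective, not injective, but surjectivity is all that is needed here.

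The core of the argument is an explicit cocycle computation on the finite level. I would take the $3$-cocycle $\delta\partial\phi \in Z^3(F'/F,(F')^{\ast})$ written out term-by-term in Step 1, and observe that its values are all of the form $(-\mu)^{\pm 1}$ or $1$. Inflating to $\Gal(F''/F)$ (of order $27$) and then passing to the Kummer boundary, I would produce an explicit $\mu_3$-valued cochain whose class maps to $i^{F'}_{\bar F}\delta\partial[\phi]$: concretely, since $\mu = (\alpha')^3$ in $F''$, the element $-\mu = -(\alpha')^3$ differs from a cube by the factor $-1 = \zeta_6^3 \cdot (\text{unit})$ — more precisely $-1$ is itself a unit but not a priori a cube, so I would absorb cube factors $(\alpha')^{3}$ and reduce the cocycle values modulo cubes in $(F'')^{\ast}$. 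The point is that $-\mu \equiv -1 \pmod{((F'')^{\ast})^3}$, and after multiplying $\delta\partial\phi$ by a suitable coboundary (a $2$-cochain) whose effect is to divide out these cube factors, I obtain a cohomologous $3$-cocycle taking values in the subgroup $\langle -1\rangle$ of roots of unity, hence in $\mu_6$; a further reduction using that we are working with a $3$-torsion class lands the representative in $\mu_3$. The key structural input is that $\Gal(F''/F')$ acts on $\alpha'$ by $\zeta$ and fixes $F'$, so the cube-factor corrections are genuinely $\Gal(F''/F)$-equivariant cochains, allowing the inflation-compatibility to go through cleanly.

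Thus the proof reduces to: (1) inflate $\delta\partial\phi$ from $\Gal(F'/F)$ to $\Gal(F''/F)$; (2) exhibit an explicit $2$-cochain $c$ on $\Gal(F''/F)$ with values in $(F'')^{\ast}$ such that $\delta\partial\phi \cdot dc$ takes values in $\mu_3$; (3) conclude that the resulting class in $H^3(F''/F,\mu_3)$ inflates to $i^{F'}_{\bar F}\delta\partial[\phi]$ via the commutative diagram relating $H^3(F''/F,\mu_3)$, $H^3(F,\mu_3)$, $H^3(F''/F,(F'')^{\ast})$, and $H^3(F,\bar F^{\ast})$ (inflation commutes with the Kummer boundary). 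Step (2) is the main obstacle: one must guess the correcting cochain $c$ and verify, by a direct but somewhat tedious check against the explicit table of $\delta\partial\phi$-values in Step 1, that all $27^3$ (really only a handful, by the reduction $\delta\partial\phi(\ast,\ast,s^{i_3}t^{j_3}) = \delta\partial\phi(\ast,\ast,s^{i_3-j_3})$ and the $t^{j_1}$-independence) cocycle values become roots of unity after the correction. I expect the natural choice is $c(\sigma,\tau) = (\alpha')^{n(\sigma,\tau)}$ for an integer-valued $2$-cochain $n$ recording the exponent of $\mu$ in the corresponding value of $\delta\partial\phi$, and the verification amounts to checking $n$ is (mod $3$) a cocycle-difference in the right way — which it must be, since $\delta\partial\phi$ itself is a cocycle and $\mu$ is a genuine element of $F$, so the "$\mu$-exponent part" splits off as an inflated class from $H^3(F'/F,\langle\mu\rangle)$ that dies after adjoining $\alpha'$.
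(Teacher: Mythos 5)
Your proposal takes essentially the same route as the paper: the paper uses the sequence $1\to\mu_3\to F''^{\ast}\xrightarrow{3}(F''^{\ast})^3\to 1$ and exhibits an explicit $2$-cochain $\psi$ with values $1,-\mu^{\pm1}\in(F''^{\ast})^3$ satisfying $d\psi=(i^{F'}_{F''}\delta\partial\phi)^3$, which is precisely your correcting cochain read through the cube map (its cube root $\tilde{\psi}$ has values $1,-\alpha'^{\pm1}$). Two small points: since $-\mu=(-\alpha')^3$ is already a perfect cube in $F''$, your detour through $\mu_6$ and the worry about the sign $-1$ are unnecessary --- taking the correcting cochain with values $(-\alpha')^{\pm1}$ lands directly in $\mu_3$; and your closing heuristic that the exponent cochain ``must'' be a coboundary cannot substitute for the explicit verification against the Step 1 table, since the obstruction to such a splitting lives in a generally nonvanishing $H^4$ of $\Gal(F''/F)\cong(\Z/3\Z)^3$.
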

\begin{proof}
 The exact sequence of $\Gal(F''/F)$-modules
\begin{equation*}
 1 \to \mu_3 \to F''^{\ast} \stackrel{3}{\to} (F''^{\ast})^3 \to 1,
\end{equation*}
yields the following commutative diagram
\begin{equation*}
\xymatrix{
 & H^3(F'/F,F'^{\ast}) \ar^{i^{F'}_{F''}}[d] &  \\
 H^3(F''/F,\mu_3) \ar[d] \ar[r] & H^3(F''/F,F''^{\ast})
  \ar^{i^{F''}_{\bar{F}}}[d] \ar^{3}[r] & H^3(F''/F,(F''^{\ast})^3) \ar[d]\\
 H^3(F,\mu_3) \ar[r] & H^3(F,\bar{F}^{\ast}) \ar^{3}[r] &
  H^3(F,\bar{F}^{\ast}), \\
}
\end{equation*}
where $i_{F'}^{F''}$ and $i^{F''}_{\bar{F}}$ are inflations and each row
 is exact. To prove the claim, it suffices to
 show $i^{F'}_{F''}\delta\partial[\phi]$ vanishes in
 $H^3(F''/F,(F''^{\ast})^3)$. Let $w$ be the generator of $\Gal(F''/F')$
 defined as \S \ref{sec:diagonal_cubic_surface}. The image of
 $i^{F'}_{F''}\delta\partial[\phi]$ under $3\colon H^3(F''/F,F''^{\ast})
 \to H^3(F''/F,(F''^{\ast})^3)$ is the class of the following cocycle:
\begin{equation*}
 (s^{i_1}t^{j_1}w^{k_1},s^{i_2}t^{j_2}w^{k_2},s^{i_3}t^{j_3}w^{k_3})
  \mapsto \delta\partial\phi(s^{i_1}t^{j_1},s^{i_2}t^{j_2},s^{i_3}t^{j_3})^3,
\end{equation*}
and what we have to prove is that this cocycle is in $B^3(F''/F,(F''^{\ast})^3)$.
Define $\psi \in C^2(F''/F,(F''^{\ast})^3)$ to be:
\begin{alignat*}{2}
&\psi(t^{j_1}w^{k_1},s^{i_2}t^{j_2}w^{k_2})=1, &\quad 
&\psi(s^{i_1}t^{j_1}w^{k_1},w^{k_2})=1, \\
&\psi(st^{j_1}w^{k_1},sw^{k_2})=-\mu^{-1}, &\quad 
&\psi(st^{j_1}w^{k_1},s^2w^{k_2})=-\mu, \\
&\psi(s^2t^{j_1}w^{k_1},sw^{k_2})=-\mu, &\quad 
&\psi(s^2t^{j_1}w^{k_1},s^2w^{k_2})=-\mu^{-1}, 
\end{alignat*}
\begin{equation*}
\psi(s^{i_1}t^{j_1}w^{k_1},s^{i_2}t^{j_2}w^{k_2})
=\psi(s^{i_1}t^{j_1}w^{k_1},s^{i_2-j_2}w^{k_2}),
\end{equation*}
where indices $i_{\ast}$, $j_{\ast}$ and $k_{\ast}$ take on any value in $\{0, 1, 2\}$.
Then we can easily see $d\psi = (i^{F'}_{F''}\delta\partial\phi)^3$ in
 $C^3(F''/F,(F''^{\ast})^3)$ and hence the class of $i^{F'}_{F''}\delta\partial\phi$
 vanishes in $H^3(F''/F,(F''^{\ast})^3)$. This completes the proof of
 Proposition \ref{prop:h3mu3}. 
\end{proof}

By using this cochain $\psi$, we can construct a cocycle $\Phi \in
Z^3(F''/F,\mu_3)$ whose image in $H^3(F,\bar{F}^{\ast})$ is
$i^{F''}_{\bar{F}}\delta\partial[\phi]$ in a usual manner. 
As a lift $\tilde{\psi}$ of $\psi$, we can take the following cochain:
\begin{alignat*}{2}
&\tilde{\psi}(t^{j_1}w^{k_1},s^{i_2}t^{j_2}w^{k_2})=1, &\quad 
&\tilde{\psi}(s^{i_1}t^{j_1}w^{k_1},w^{k_2})=1, \\
&\tilde{\psi}(st^{j_1}w^{k_1},sw^{k_2})=-\alpha'^{-1}, &\quad 
&\tilde{\psi}(st^{j_1}w^{k_1},s^2w^{k_2})=-\alpha', \\
&\tilde{\psi}(s^2t^{j_1}w^{k_1},sw^{k_2})=-\alpha', &\quad 
&\tilde{\psi}(s^2t^{j_1}w^{k_1},s^2w^{k_2})=-\alpha'^{-1}, 
\end{alignat*}
\begin{equation*}
\tilde{\psi}(s^{i_1}t^{j_1}w^{k_1},s^{i_2}t^{j_2}w^{k_2})
=\tilde{\psi}(s^{i_1}t^{j_1}w^{k_1},s^{i_2-j_2}w^{k_2}).
\end{equation*}
Then we can take the cocycle $\Phi \in Z^3(F''/F,\mu_3)$ explicitly as follows:
\begin{equation*}
 (s^{i_1}t^{j_1}w^{k_1},s^{i_2}t^{j_2}w^{k_2},s^{i_3}t^{j_3}w^{k_3})
  \mapsto
  \dfrac{\tilde{\psi}(s^{i_2}t^{j_2}w^{k_2},s^{i_3}t^{j_3}w^{k_3})}{w^{k_1}\tilde{\psi}(s^{i_2}t^{j_2}w^{k_2},s^{i_3}t^{j_3}w^{k_3})}
  \in \mu_3.
\end{equation*} 

{\bf Step 3.} For any prime divisor $D \subset \A_k^3=\Spec
 k[\lambda,\mu,\nu]$, we have the following commutative diagram:
\begin{equation*}
\xymatrix{
H^3(F''/F,\mu_3) \ar^{i^{F''}_{\bar{F}}}[d]& \\
H^3(F,\mu_3) \ar[d] \ar^(0.45){\res_D}[r] & H^2(k(D),\Z/3\Z) \ar[d]\\
H^3(F,\Q/\Z(1)) \ar_{\cong}[d] \ar^(0.45){\res_D}[r] & H^2(k(D),\Q/\Z) \\
H^3(F,\bar{F}^{\ast}), &
}
\end{equation*}
where $F=k(\lambda,\mu,\nu)$ is considered as the function field of
 $\A_k^3$, $k(D)$ is the function field of $D$, and $\res_D$ are residue
 maps associated to $D$.

Recall that our goal is to prove the nontriviality of 
 $i^{F'}_{\bar{F}}\delta\partial[\phi] \in
 H^3(F,\bar{F}^{\ast})$. To prove this, by the above diagram, it suffices to show:
\begin{equation}\label{eq:residue_is_nonzero_in_H2}
\text{There exists } D \subset \A^3_{k} \text{ such that }
 \res_D(i^{F''}_{\bar{F}}[\Phi]) \neq 0 \in H^2(k(D),\Q/\Z).
\end{equation} 

In the sequel, $D$ always denotes the divisor $\{\mu=0\} \subset
 \A^3_k$. Let $\cO_D$ be the completion of the local ring
 $k[\lambda,\mu,\nu]_{(\mu)}$ at its maximal ideal and $F_D$ its
 fractional field. Note that $\mu$ is a uniformizer of $\cO_D$ and the
 residue field of $\cO_D$ is isomorphic to $k(D)=k(\lambda,\nu)$.
 
 Now we should recall the definition of $\res_D$. There is the canonical isomorphism
\begin{equation*}
\iota\colon \Hom(G_{F_D^{\ur}},\mu_3) =H^1(F_D^{\ur},\mu_3) \cong
 {F_D^{\ur}}^{\ast}/({F_D^{\ur}}^{\ast})^3 \cong \Z/3\Z,
\end{equation*}
where the middle isomorphism is induced by Kummer sequence and the right one
 is given by normalized valuation on $F_D^{\ur}$. Then $\res_D$ is given by
\begin{equation*}
H^3(F,\mu_3)\to H^3(F_D,\mu_3) \stackrel{r}{\to}
 H^2(k(D),\Hom(G_{F_D^{\ur}},\mu_3)) \stackrel{H^2(\iota)}{\to} H^2(k(D),\Z/3\Z).
\end{equation*}
For an explicit description of the residue map $r$, see
\cite{garibaldi2003cohomological}(III, Theorem 6.1).

Now we describe the class $r[i^{F''}_{\bar{F}}\Phi] \in
H^2(k(D),\Hom(G_{F^{\ur}_D},\mu_3))$ explicitly. By the definition of
$r$ and the fact $i^{F''}_{\bar{F}}\Phi$ originally comes from the
cocycle $\Phi$ of $\Gal(F''/F)$, we would naturally expect that
$ri^{F''}_{\bar{F}}\Phi$ also comes from the cocycle of the Galois group
of residue fields of $F''/F$ along to $D$. In fact, we find that it is
true.

 Before stating the claim, we introduce some field extensions. Let
 $k(D)'$, $F_D''$, $F_D'$ be the same notation as in \S
 \ref{sec:diagonal_cubic_surface}. Moreover, by abuse of notation, we
 denote the elements in $\Gal(F_D''/F_D)$ corresponding to $s$, $t$ and
 $w \in \Gal(F''/F)$ as the same symbols.  To make our situation clear,
 we give the following diagram of field extensions:
\begin{equation*}
\xymatrix{
& & F_D'' \ar@{.>}^(0.45){{\rm residue\ field}}[rr]
\ar@{-}^{\rm ramified}_{3}[d] & & k(D)' \ar@{=}[d] \\
F'' \ar[urr] \ar@{-}_{3}[d] & & F_D' \ar@{.>}[rr]
 \ar@{-}^{\rm unramified}_{9}[d] & &
 k(D)' \ar@{-}_{9}[d] \\
F' \ar[urr] \ar@{-}_{9}[d] & & F_D \ar@{.>}[rr] & & k(D) \\
F \ar_(0.4){{\rm completion}}[urr] & & & &\\
}
\end{equation*}

The claim is:
\begin{lem}\label{lem:rPhi}
If we define the cochain
\begin{equation*}
\bar{r\Phi} \in C^2(k(D)'/k(D),\Hom(\Gal(F_D''/F_D'),\mu_3))
\end{equation*}
as
\begin{equation*}
\bar{r\Phi}(\bar{s}^{i_1}\bar{t}^{j_1},\bar{s}^{i_2}\bar{t}^{j_2})(w^k):=\Phi(w^k,s^{i_1}t^{j_1},s^{i_2}t^{j_2}),
\end{equation*}
where $\bar{s}$ and $\bar{t}$ is the image of $s$ and $t$ under the
 natural map
\begin{equation*}
  \Gal(F_D''/F_D) \to \Gal(k(D)'/k(D)),
\end{equation*}
then $\bar{r\Phi}$ is a cocycle and its image under the map
\begin{equation*}
  i^{k(D)'}_{\bar{k(D)}}\colon H^2(k(D)'/k(D),\Hom(\Gal(F_D''/F_D'),\mu_3)) \to
   H^2(k(D),\Hom(G_{F_D^{\ur}},\mu_3))
\end{equation*}
is $r[i^{F''}_{\bar{F}}\Phi].$
\end{lem}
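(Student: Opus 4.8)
The plan is to recognize $r$ as a residue (edge) map of a Hochschild--Serre spectral sequence, to reduce the computation of $r[i^{F''}_{\bar F}\Phi]$ to a finite-level computation on the explicit cocycle $\Phi$ by functoriality under inflation from the decomposition and inertia groups at $D$, and then to read the answer off from the formula for $\Phi$. First I would record the arithmetic of $F_D$. Since $\lambda$ and $\nu$ are units in $\cO_D$ while $\mu$ is a uniformizer, $F_D'=F_D(\alpha,\gamma)$ is unramified of degree $9$ over $F_D$ and $F_D''=F_D'(\alpha')$ is tamely and totally ramified of degree $3$ over $F_D'$, with $\alpha'$ a cube root of the uniformizer $\mu$. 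Hence $F_D'\subset F_D^{\ur}$, the image of the inertia group $I_{F_D}=G_{F_D^{\ur}}$ in $\Gal(F_D''/F_D)$ is exactly $\Gal(F_D''/F_D')=\langle w\rangle$, and $\Gal(F_D''/F_D)/\langle w\rangle\cong\Gal(F_D'/F_D)\cong\Gal(k(D)'/k(D))$ is a finite quotient of $G_{k(D)}$. Moreover $\Gal(F_D''/F_D)=\langle s\rangle\times\langle t\rangle\times\langle w\rangle$, so $\langle s,t\rangle$ is a splitting of $\Gal(k(D)'/k(D))$, and $\mu_3$ is a trivial Galois module because $\zeta\in k$. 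In particular the extension $1\to\langle w\rangle\to\Gal(F_D''/F_D)\to\Gal(k(D)'/k(D))\to1$ maps compatibly to $1\to I_{F_D}\to G_{F_D}\to G_{k(D)}\to1$.

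By \cite{garibaldi2003cohomological} (III, Theorem 6.1) the map $r$ is the residue map of the Hochschild--Serre spectral sequence $E_2^{p,q}=H^p(k(D),H^q(I_{F_D},\mu_3))\Rightarrow H^{p+q}(F_D,\mu_3)$, that is, the edge map $H^3(F_D,\mu_3)\to E_\infty^{2,1}\hookrightarrow E_2^{2,1}=H^2(k(D),\Hom(G_{F_D^{\ur}},\mu_3))$, which is defined on all of $H^3(F_D,\mu_3)$ because $\cd I_{F_D}\le1$. The restriction of $i^{F''}_{\bar F}[\Phi]$ along $F\hookrightarrow F_D$ is inflated from $H^3(\Gal(F_D''/F_D),\mu_3)$ via the cocycle $\Phi$, since $F_D\cdot F''=F_D''$. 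Because the edge map is functorial for maps of group extensions, applying $r$ to this inflated class yields the inflation---in the group variable $G_{k(D)}\to\Gal(k(D)'/k(D))$ and in the coefficient variable $\Hom(\langle w\rangle,\mu_3)\to\Hom(G_{F_D^{\ur}},\mu_3)$, which together constitute $i^{k(D)'}_{\bar{k(D)}}$---of the image of $[\Phi]$ under the finite-level edge map into $H^2(\Gal(k(D)'/k(D)),\Hom(\langle w\rangle,\mu_3))$. For this finite-level edge map to be defined on $[\Phi]$ one needs $[\Phi]$ to lie deep enough in the Hochschild--Serre filtration, i.e.\ to have vanishing image in $E_\infty^{0,3}$ and $E_\infty^{1,2}$; this I would deduce from the explicit formula for $\Phi$, which shows that $\Phi$ equals $1$ on every triple in which at least two of the three arguments lie in $\langle w\rangle$.

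It then remains to identify the $(2,1)$-component of $\Phi$ with $\bar{r\Phi}$. Using the direct-product splitting $\Gal(F_D''/F_D)=\langle s,t\rangle\times\langle w\rangle$ and the triviality of the $\mu_3$-action, a routine bar-complex computation shows that the edge map sends $[\Phi]$ to the class of the $2$-cochain $(q_1,q_2)\mapsto\bigl(w^k\mapsto\Phi(w^k,q_1,q_2)\bigr)$ for $q_1,q_2\in\langle s,t\rangle$, the $\langle w\rangle$-variable being placed in the first slot according to the spectral-sequence conventions; substituting $q_i=\bar s^{i}\bar t^{j}$ lifted to $s^{i}t^{j}$ produces precisely $\bar{r\Phi}$. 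That $\bar{r\Phi}$ is well defined is immediate from $\Phi(w^k,q_1,q_2)=\tilde\psi(q_1,q_2)/{}^{w^k}\tilde\psi(q_1,q_2)$ together with $w\alpha'=\zeta\alpha'$: since $\tilde\psi(q_1,q_2)\in\{1,-\alpha',-\alpha'^{-1}\}$ the value is $\zeta^{k\,\epsilon(q_1,q_2)}$ with $\epsilon(q_1,q_2)\in\{-1,0,1\}$, which is a homomorphism in $w^k$, so $\bar{r\Phi}(q_1,q_2)\in\Hom(\Gal(F_D''/F_D'),\mu_3)$. That $\bar{r\Phi}$ is a $2$-cocycle then follows by evaluating $d\Phi=0$ on quadruples with one argument in $\langle w\rangle$ and three in $\langle s,t\rangle$ and using the triviality of the action.

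The main obstacle is the bookkeeping in the middle step: matching the abstract Hochschild--Serre edge map, with its shuffle and sign conventions and with the Tate twist implicit in the identification of $\Hom(G_{F_D^{\ur}},\mu_3)$ with $\Z/3\Z$, to the normalization of the residue map $r$ in \cite{garibaldi2003cohomological}, and in particular confirming that the inertia variable of $\Phi$ must be placed in the first slot rather than the last or accompanied by a correction cochain. Once these normalizations are pinned down, the remainder is a direct verification with the cocycle $\Phi$ and the lift $\tilde\psi$ already written down in Steps 1 and 2.
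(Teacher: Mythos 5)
Your proposal is correct and is, in substance, the argument the paper intends: the paper's own proof of this lemma is a single sentence (``easy to check by the definition of $r$ and \ldots left to the reader''), so the task is precisely to unpack that definition, and you do it along the right lines. Identifying $r$ with the edge map to $E_2^{2,1}$ of the Hochschild--Serre spectral sequence for $1\to I_{F_D}\to G_{F_D}\to G_{k(D)}\to 1$, noting that $F_D'/F_D$ is unramified of degree $9$ while $F_D''/F_D'$ is totally (tamely) ramified so that inertia surjects onto $\langle w\rangle$, and then using functoriality of the edge map under inflation from the finite extension $1\to\langle w\rangle\to\Gal(F_D''/F_D)\to\Gal(k(D)'/k(D))\to 1$ is exactly the mechanism that makes ``the definition of $r$'' compute; and your observation that $\Phi$ equals $1$ whenever two of its three arguments lie in $\langle w\rangle$ is the correct cochain-level witness that $[\Phi]$ lies in the second filtration step at the finite level, a point that genuinely needs checking there (unlike at the profinite level, where $\cd I_{F_D}=1$ makes it automatic). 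The one issue you leave open --- whether the inertia variable of $\Phi$ belongs in the first or the last slot, together with the attendant sign, relative to the normalization of Theorem 6.1 in Part III of Garibaldi--Merkurjev--Serre --- is a real piece of unfinished bookkeeping if one wants the exact equality asserted in the lemma, but it is harmless for the paper's purposes: the lemma is only used to conclude nonvanishing of the residue (via Proposition \ref{prop:residue_is_nonzero_in_H1}), and nontriviality of the class is unaffected by replacing it with its inverse or by permuting the slot convention.
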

\begin{proof}
we can prove that $\bar{r\Phi}$ is a cocycle by a straightforward
 calculation. The latter claim is easy to check by the definition of $r$
 and the proof is left to the reader.
\end{proof}
By using natural isomorphisms $\iota$ and 
\[
 \Hom(\Gal(F_D''/F_D'),\mu_3) \cong \Z/3\Z; \quad (w \mapsto \zeta)
 \mapsto 1,
\]
we rewrite $i^{k(D)'}_{\bar{k(D)}}$ simply as follows:
\[
 H^2(k(D)'/k(D),\Z/3\Z) \to H^2(k(D),\Z/3\Z).
\]

For a field $K$ of characteristic $0$, we denote $\tilde{K}$ by
 $\bigcup_{n>0} K(\zeta_n)$, where $\zeta_n$ is a primitive $n$-th root
 of unity. Noting that $k(D)'=k(D)(\alpha,\gamma)$ and that $\alpha$ and
 $\gamma$ are transcendental over $k$, we have $k(D)' \cap \tilde{k(D)}
 = k(D)$ and therefore
\begin{equation*}
 \Gal(\tilde{k(D)'}/\tilde{k(D)}) \stackrel{\cong}{\to} \Gal(k(D)'/k(D)).
\end{equation*}
We fix an isomorphism $\Q/\Z \cong \Q/\Z(1)$ as trivial
 $\tilde{k(D)}$-modules. Then we have the following
 commutative diagram:
\begin{equation*}
\xymatrix{
H^2(k(D)'/k(D),\Z/3\Z) \ar_{\cong}[d] \ar[r] & H^2(k(D),\Q/\Z) \ar[d] \\
H^2(\tilde{k(D)'}/\tilde{k(D)},\Z/3\Z) \ar_{\cong}[d] \ar[r] &
 H^2(\tilde{k(D)},\Q/\Z) \ar^{\cong}[d] \\
H^2(\tilde{k(D)'}/\tilde{k(D)},\mu_3) \ar[d] \ar[r] &
H^2(\tilde{k(D)},\Q/\Z(1)) \ar^{\cong}[d] \\
H^2(\tilde{k(D)},\mu_3) \ar[r] & H^2(\tilde{k(D)},\bar{k(D)}^{\ast}).
}
\end{equation*}
Since the bottom map in the above diagram is injective by Hilbert's
 Theorem 90, in order to prove the claim
 (\ref{eq:residue_is_nonzero_in_H2}), it suffices to show:
\begin{equation*}
[\bar{r\Phi}] \in H^2(k(D)'/k(D),\Z/3\Z) \text{ does not vanish in
 }H^2(\tilde{k(D)},\mu_3).
\end{equation*}

{\bf Step 4.} For simplicity, we put
 $E=\tilde{k(D)}=\tilde{k}(\lambda,\nu)$ and
 $E'=\tilde{k(D)'}=E(\alpha,\gamma)$. we define the cocycle $\Psi \in
 Z^2(E'/E,\mu_3)$ as follows:
\begin{align*}
\Psi(t^{j_1},s^{i_2}t^{j_2})=1, \quad
\Psi(st^{j_1},s^{i_2}t^{j_2})=
\begin{cases} 1 & j_2=0\\ \zeta^2 & j_2=1\\ \zeta & j_2=2,\end{cases}\quad
\Psi(s^2t^{j_1},s^{i_2}t^{j_2})=
\begin{cases} 1 & j_2=0\\ \zeta & j_2=1\\ \zeta^2 & j_2=2.\end{cases}
\end{align*}
We can easily check that $[\Psi]$ is the image of $[\bar{r\Phi}] \in
 H^2(k(D)'/k(D),\Z/3\Z)$ under the isomorphism $H^2(k(D)'/k(D),\Z/3\Z)
 \stackrel{\cong}{\to} H^2(E'/E,\mu_3)$ in the above diagram.

What we have to show is that the image of $[\Psi] \in H^2(E'/E,\mu_3)$ under
\begin{equation*}
i^{E'}_{\bar{E}}\colon H^2(E'/E,\mu_3) \to H^2(E,\mu_3)
\end{equation*}
is nonzero. This is a consequence of the following:
\begin{prop} \label{prop:residue_is_nonzero_in_H1}
Put $D'=\{\nu=0\} \subset \A^2_k$. The image of $i^{E'}_{\bar{E}}[\Psi]$ under the residue map
\begin{equation*}
\res_{D'}\colon H^2(E,\mu_3) \to H^1(k(D'),\Z/3\Z)
\end{equation*}
is nonzero.
\end{prop}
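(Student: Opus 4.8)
The plan is to identify the inflated class $i^{E'}_{\bar{E}}[\Psi]\in H^2(E,\mu_3)$ with a single norm residue symbol, and then read off its residue along $D'$ from the tame symbol formula. First I would record the ambient structure: since $\tilde{k}$ contains all roots of unity, $\mu_3$ is a trivial $\Gal(E'/E)$-module, and $\Gal(E'/E)\cong\Gal(k(D)'/k(D))\cong\langle s\rangle\times\langle t\rangle\cong(\Z/3\Z)^2$ — this uses the degree count of condition (ii) in \S\ref{sec:diagonal_cubic_surface} together with the already noted equality $k(D)'\cap\tilde{k(D)}=k(D)$ — where $s$ fixes $\alpha$ and $s\gamma=\zeta\gamma$, while $t$ fixes $\gamma$ and $t\alpha=\zeta\alpha$.

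Next I would observe, directly from the definition, that $\Psi(s^{i_1}t^{j_1},s^{i_2}t^{j_2})=\zeta^{-i_1j_2}$ for all $i_1,j_1,i_2,j_2\in\{0,1,2\}$. Writing $f$ and $g$ for the characters of $\Gal(E'/E)$ with $f(s^it^j)=\zeta^i$ and $g(s^it^j)=\zeta^j$, this says precisely that $\Psi=-(f\cup g)$ as a $\mu_3$-valued cochain, under the twist $\mu_3^{\otimes2}\cong\mu_3$, $\zeta\otimes\zeta\mapsto\zeta$. By Kummer theory the inflations of $f$ and $g$ to $H^1(E,\mu_3)\cong E^{\ast}/(E^{\ast})^3$ are the classes of $\nu$ and of $\lambda$ respectively (since $\gamma^3=\nu$ with $s\gamma/\gamma=\zeta$, and $\alpha^3=\lambda$ with $t\alpha/\alpha=\zeta$), so, inflation commuting with cup products, I would conclude $i^{E'}_{\bar{E}}[\Psi]=-\{\nu,\lambda\}_3=\{\lambda,\nu\}_3\in H^2(E,\mu_3)$, the last equality by bilinearity and $\{a,b\}_3=-\{b,a\}_3$.

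It then remains to compute $\res_{D'}\{\lambda,\nu\}_3$. The valuation $v$ attached to $D'=\{\nu=0\}$ on $E=k(D')(\nu)$ satisfies $v(\nu)=1$ and $v(\lambda)=0$, with residue field $k(D')=\tilde{k}(\lambda)$; hence by the tame symbol description of the residue on symbols (\cite{garibaldi2003cohomological}, III, Theorem 6.1) one gets $\res_{D'}\{\lambda,\nu\}_3=\overline{\lambda}\in k(D')^{\ast}/(k(D')^{\ast})^3\cong H^1(k(D'),\mu_3)=H^1(k(D'),\Z/3\Z)$, the last identification because $\mu_3\subset\tilde{k}\subset k(D')$. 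Finally I would note that $\lambda$ is transcendental over $\tilde{k}$, so it is not a cube in $\tilde{k}(\lambda)^{\ast}$ — for instance its valuation at the place $\lambda=0$ equals $1\not\equiv0\pmod 3$ — whence $\overline{\lambda}\neq0$ and $\res_{D'}i^{E'}_{\bar{E}}[\Psi]\neq0$, which is the assertion.

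The one step that needs care is the identification in the second paragraph: one has to line up the bar-resolution cup-product formula, the Kummer dictionary (which of $s,t$ goes with which of $\lambda,\nu$), and the twist $\mu_3^{\otimes2}\cong\mu_3$ consistently, so that $i^{E'}_{\bar{E}}[\Psi]$ genuinely equals the symbol $\{\lambda,\nu\}_3$ up to sign. Once that is in place the rest is the standard tame-symbol computation together with the transcendence of $\lambda$. As an alternative one could bypass the symbol altogether and feed $\Psi$ straight into the explicit residue formula of \cite{garibaldi2003cohomological}; the bookkeeping is the same, and the transcendence of $\lambda$ is again what makes the residue nonzero.
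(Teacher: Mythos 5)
Your proof is correct, but it takes a genuinely different route from the paper's. You first identify the class: from $\Psi(s^{i_1}t^{j_1},s^{i_2}t^{j_2})=\zeta^{-i_1j_2}$ you recognize $[\Psi]$ as minus the cup product of the Kummer characters of $\nu$ and $\lambda$ (since $s\gamma/\gamma=\zeta$, $t\alpha/\alpha=\zeta$), hence $i^{E'}_{\bar E}[\Psi]=\{\lambda,\nu\}_3$, and then the residue along $\{\nu=0\}$ is the tame symbol $\bar\lambda\in k(D')^{\ast}/(k(D')^{\ast})^3$, nonzero because $\lambda$ is transcendental over $\tilde k$. The paper instead never names the class as a symbol: it pushes the explicit cocycle $\Psi$ through the residue machinery of Garibaldi--Merkurjev--Serre, producing (Lemma \ref{lem:rPsi}) a $1$-cochain $\bar{r\Psi}(\bar t^{\,j})(s^i)=\Psi(s^i,t^j)$ on $\Gal(k(D')(\alpha)/k(D'))$ with values in $\Hom(\Gal(E'_{D'}/E_{D'}(\alpha)),\mu_3)$, checks it is a cocycle, and concludes by injectivity of the inflation for the degree-$3$ extension $k(D')(\alpha)/k(D')$ together with $[\bar{r\Psi}]\neq 0$. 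Your identification $[\Psi]=\{\lambda,\nu\}_3$ is the one step requiring care (cup-product convention, the twist $\zeta^a\otimes\zeta^b\mapsto\zeta^{ab}$, and which of $s,t$ matches $\nu,\lambda$), but you flag this, your computation of $\Psi$ checks out against the paper's table, and in any case the conclusion is insensitive to the overall sign since $\bar\lambda^{\pm1}$ is a non-cube either way. What your approach buys is conceptual transparency --- the obstruction class is exhibited as a concrete norm residue symbol, and the nonvanishing becomes the standard fact $\partial_{\nu}\{\lambda,\nu\}_3=\bar\lambda$; what the paper's approach buys is independence from cup-product sign conventions and uniformity with its Step 3 (Lemma \ref{lem:rPhi}), where the analogous class is not obviously a single symbol.
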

\begin{proof}
We fix notations. Let $\cO_{D'}$ be the completion of the local ring
 $k[\lambda,\nu]_{(\nu)}$ at its maximal ideal and $E_{D'}$ its
 fractional field. Note that $\nu$ is a uniformizer of $\cO_{D'}$ and
 the residue field of $\cO_{D'}$ is isomorphic to
 $k(D')=k(\lambda)$. Let $E_{D'}'$ be the same notation as in \S
 \ref{sec:diagonal_cubic_surface}. By abuse of notation, we denote the
 elements in $\Gal(E_{D'}'/E_{D'})$ corresponding to $s$ and $t \in
 \Gal(E'/E)$ as the same symbols.  To make our situation clear, we give
 the following diagram of field extensions:
\begin{equation*}
\xymatrix{
& & E_{D'}' \ar@{.>}^(0.45){{\rm residue\ field}}[rr]
\ar@{-}^{\rm ramified}_{3}[d] & & k(D')(\alpha) \ar@{=}[d] \\
E' \ar[urr] \ar@{-}_{3}[d] & & E_{D'}(\alpha) \ar@{.>}[rr]
 \ar@{-}^{\rm unramified}_{3}[d] & &
 k(D')(\alpha) \ar@{-}_{3}[d] \\
E(\alpha) \ar[urr] \ar@{-}_{3}[d] & & E_{D'} \ar@{.>}[rr] & & k(D'). \\
E \ar_(0.4){{\rm completion}}[urr] & & & &\\
}
\end{equation*}
Now $\res_{D'}$ is given by
\begin{equation*}
H^2(E,\mu_3)\to H^2(E_{D'},\mu_3) \stackrel{r}{\to}
 H^1(k(D'),\Hom(G_{E_{D'}^{\ur}},\mu_3)) \stackrel{\cong}{\to} H^1(k(D'),\Z/3\Z).
\end{equation*}

We also have a similar result to Lemma \ref{lem:rPhi}:
\begin{lem}\label{lem:rPsi}
If we define the cochain
\begin{equation*}
\bar{r\Psi} \in C^1(k(D')(\alpha)/k(D'),\Hom(\Gal(E_{D'}'/E_{D'}(\alpha)),\mu_3))
\end{equation*}
as
\begin{equation*}
\bar{r\Psi}(\bar{t}^{j})(s^i):=\Psi(s^i,t^j),
\end{equation*}
where $\bar{t}$ is the image of $t$ under the
 natural map
\begin{equation*}
  \Gal(E_{D'}'/E_{D'}) \to \Gal(k(D')(\alpha)/k(D')),
\end{equation*}
then $\bar{r\Psi}$ is a cocycle and its image under the map
\begin{equation*}
  i^{k(D')(\alpha)}_{\bar{k(D')}}\colon H^1(k(D')(\alpha)/k(D'),\Hom(\Gal(E_{D'}'/E_{D'}(\alpha)),\mu_3)) \to
   H^1(k(D'),\Hom(G_{E_{D'}^{\ur}},\mu_3))
\end{equation*}
is $ri^{E'}_{\bar{E}}[\Psi].$
\end{lem}
\begin{proof}
 The claim follows from similar calculations in Lemma
 \ref{lem:rPhi}. The details are left to the reader.
\end{proof}
We now go back to the proof of Proposition
\ref{prop:residue_is_nonzero_in_H1}. We know that
$i^{k(D')(\alpha)}_{\bar{k(D')}}$ is injective. Moreover, we can easily
check $[\bar{r\Psi}] \neq 0$ by definition. Therefore
$\res_{D'}(i^{E'}_{\bar{E}}[\Psi]) \neq 0$, which completes the proof of
Proposition \ref{prop:residue_is_nonzero_in_H1}.
\end{proof}

Theorem \ref{thm:vanishing_of_Brauer_group} is a consequence of
Proposition \ref{prop:residue_is_nonzero_in_H1}.
\end{proof}
Next we give a proof of Proposition \ref{prop:C(k)}. Before proving
the proposition, we note the following.
\begin{lem}\label{lem:density}
 Let $S_0, S_1$ and $S_2$ be infinite subsets of
 $k^{\ast}.$ Then $S_0\times S_1\times S_2$ is Zariski dense in
 $(\G_{m,k})^3.$
\end{lem}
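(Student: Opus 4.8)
The plan is to reduce the assertion to the elementary algebraic fact that a nonzero polynomial over $k$ in three variables cannot vanish identically on a product of three infinite subsets of $k^{\ast}$. First I would observe that $(\G_{m,k})^3$ is irreducible and is a nonempty open subscheme of $\A_k^3$, hence dense in $\A_k^3$; since $S_0\times S_1\times S_2\subset(\G_{m,k})^3$, it therefore suffices to show that the Zariski closure of $S_0\times S_1\times S_2$ inside $\A_k^3$ is all of $\A_k^3$, i.e.\ that the only $f\in k[x_1,x_2,x_3]$ vanishing at every point of $S_0\times S_1\times S_2$ is $f=0$. Indeed, once $\overline{S_0\times S_1\times S_2}=\A_k^3$ in $\A_k^3$, intersecting with the open subset $(\G_{m,k})^3$, which contains the product, gives density in $(\G_{m,k})^3$.

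The key step is an induction on the number of variables. The base case of one variable is immediate: a nonzero one-variable polynomial has only finitely many roots, while $S_0$ is infinite, so it cannot vanish on $S_0$. For the inductive step in three variables, I would write $f=\sum_j g_j(x_1,x_2)\,x_3^j$ with $g_j\in k[x_1,x_2]$; for each fixed $(a_1,a_2)\in S_0\times S_1$ the one-variable polynomial $f(a_1,a_2,x_3)$ vanishes on the infinite set $S_2$, hence is identically zero, so $g_j(a_1,a_2)=0$ for all $j$. Thus each $g_j$ vanishes on $S_0\times S_1$, and the same reasoning in two variables (itself reduced to the one-variable case) forces $g_j=0$, whence $f=0$.

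There is essentially no obstacle here; the only minor point to be careful about is to run the density argument in $\A_k^3$ — where regular functions are honest polynomials — rather than directly in $(\G_{m,k})^3$, whose coordinate ring is a ring of Laurent polynomials (one could alternatively clear denominators and argue there). Combining the reduction of the first paragraph with the induction of the second yields the lemma.
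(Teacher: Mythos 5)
Your proof is correct, and it takes a genuinely different route from the paper's. You pass to the vanishing ideal in $\A_k^3$ and prove, by induction on the number of variables, that a nonzero polynomial over $k$ cannot vanish identically on a product of three infinite subsets of $k^{\ast}$; this is the standard `combinatorial' argument and it is airtight. The paper instead argues topologically: it asserts that the open sets of the form $U_0\times U_1\times U_2$ form a base of the Zariski topology of $(\G_{m,k})^3$, reduces to showing $S_0\cap U\neq\emptyset$ for $U=\Spec k[\lambda^{\pm}]_f$, and concludes from the fact that a nonzero one-variable polynomial has finitely many roots. Your route is in fact the more robust of the two: the Zariski topology on a product of varieties is strictly finer than the product topology (the complement of the diagonal in $(\G_{m,k})^2$ is a nonempty open containing no nonempty product of opens), so the paper's reduction to product opens is delicate and, to be fully justified, needs essentially the multi-variable polynomial induction that you carry out explicitly. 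Both arguments bottom out in the same one-variable fact; yours makes the several-variable bookkeeping explicit rather than hiding it in a topological claim.
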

\begin{proof}
It suffices to show for any sufficient small open subscheme $W$ in
 $(\G_{m,k})^3$, the intersection $(S_0 \times S_1 \times S_2) \cap W$
 is non-empty. Since all open subschemes $U_0 \times U_1 \times U_2
 \subset (\G_{m,k})^3$ form an open base, where $U_i$ run through affine
 open subschemes in $\G_{m,k}$, we may assume that $W$ is of the form $U
 \times U \times U$ for an affine open subscheme $U \subset
 \G_{m,k}$. Hence it suffices to prove that $S_0 \cap U \neq \emptyset$
 for any sufficiently small affine open $U \subset \G_{m,k}$. We may take
 $U$ as an affine open subscheme of the form:
\begin{equation*}
U=\Spec k[\lambda^{\pm}]_f,\quad 0 \neq f \in
 k[\lambda].
\end{equation*}
Since $S_0$ is infinite, there exists $\lambda_0 \in S_0$ such that
 $f(\lambda_0) \neq 0$. Then $\lambda_0 \in S_0 \cap U$ and in
 particular $S_0 \cap U$ is non-empty.
\end{proof}

\begin{proof}[Proof of Proposition {\rm\ref{prop:C(k)}}]
(1) $\Rightarrow$ (2). This is a trivial implication.

(2) $\Rightarrow$ (3). We prove the contrapositive statement. If we assume
 $\dim_{\F_3}k^{\ast}/(k^{\ast})^3=1$, we can take $v \in
 k^{\ast}\setminus (k^{\ast})^3$. Then the equation of diagonal cubic
 surfaces is essentially equal to one of the following:
\begin{alignat*}{3}
 &x^3+y^3+z^3+t^3=0, \quad
 &&x^3+y^3+z^3+vt^3=0,\quad
 &&x^3+y^3+z^3+v^2t^3=0,\\
 &x^3+y^3+vz^3+vt^3=0,\quad
 &&x^3+y^3+vz^3+v^2t^3=0,
\end{alignat*}
all of which have a $k$-rational point. We also see by
Proposition \ref{prop:ctks} that $H^1(k,\Pic(\bar{V_P}))\cong 0$ or $(\Z/3\Z)^2$ for all
 $P \in (\G_{m,k})^3$, and therefore
\begin{equation*}
\forall P \in (\G_{m,k})^3, \ \Br(V_P)/\Br(k) \cong 0 \mbox{ or } (\Z/3\Z)^2   
\end{equation*}
by Lemma \ref{lem:isom_condition}. We can also prove the case
 $\dim_{\F_3}k^{\ast}/(k^{\ast})^3=0$ in a similar way. Hence we have
 $\cP_k=\emptyset$.

(3) $\Rightarrow$ (1). We first construct a subset $\cP$ of $\A_k^3$
 satisfying the following three conditions:
\begin{itemize}
 \item[(i)] $\cP$ is Zariski dense in $\A_k^3$;
 \item[(ii)] $P \in \cP \Rightarrow V_P(k) \neq \emptyset$;
 \item[(iii)] $P \in \cP \Rightarrow H^1(k,\Pic(\bar{V_P})) \cong
	     \Z/3\Z$.
\end{itemize}
Since $\dim_{\F_3}k^{\ast}/(k^{\ast})^3\geq 2$, we can take two linearly
 independent elements $v_1$ and $v_2$. Now we define $\cP$ as
\begin{equation*}    
 \cP=S_0\times S_1\times S_2, \quad S_0=(k^{\ast})^3,\quad
  S_1=v_1(k^{\ast})^3,\quad S_2=v_2(k^{\ast})^3.
\end{equation*}
We show that $\cP$ satisfies the above three conditions. First, by Lemma
 \ref{lem:density} and the assumption that $(k^{\ast})^3$ is infinite,
 the condition (i) holds. Secondly, for $P=(\lambda_0, \mu_0, \nu_0) \in
 \cP$, we can take $\lambda_0' \in k^{\ast}$ such that
 $(\lambda_0')^3=\lambda_0$, and $V_P$ has a $k$-rational point
 $(\lambda_0':-1:0:0)$. Hence the condition (ii) holds. Finally, by the
 choice of $v_1$ and $v_2 \in k^{\ast}$ and Proposition \ref{prop:ctks},
 we can see that the condition (iii) holds.

Conditions (ii), (iii) and Lemma \ref{lem:isom_condition} imply
 $\Br(V_P)/\Br(k) \cong \Z/3\Z$ for all $P \in \cP$ and therefore we
 complete the proof of (1) $\Rightarrow$ (3).

This completes the proof of Proposition \ref{prop:C(k)}.  
\end{proof}

\begin{proof}[Proof of Corollary {\rm\ref{cor:nonexistance_of_generator}}] 
We would have an element $e \in \Br(V)$ satisfying the property stated
 in Corollary \ref{cor:nonexistance_of_generator}:
\begin{quote}
 there exists a dense open subset $W \subset (\G_{m,k})^3$ such that
 $\sp(e;\cdot)$ is defined on $W(k) \cap \cP_k$ and for all $P \in
 W(k)\cap \cP_k$, $\sp(e;P)$ is a generator of $\Br(V_P)/\Br(k)$.
\end{quote}
By Theorem \ref{thm:vanishing_of_Brauer_group}, we have
\begin{equation*}
 \Br(V)/\Br(F)=0
\end{equation*}
and hence there exists an element $e' \in \Br(F)$ such that
 $\pi_F^{\ast}e'=e$. We have the isomorphism
 \begin{equation*}
  \injlim_{i} \Br(S_i) = \Br(F),
\end{equation*}
where $(S_i)$ is the projective system of the non-empty open affine
 subschemes in $\A^3_{k}$, and there exists a non-empty affine open
 subscheme $S$ and $\tilde{e'} \in \Br(S)$ such that $\tilde{e'}$ is a
 lift of $e'$ and $\cV\times_{\A_k^3}S$ is smooth over $S$. Since
 $\cP_k$ is a Zariski dense set in $(\G_{m,k})^3$ by Proposition
 \ref{prop:C(k)} and $S \cap W$ is a non-empty Zariski open set in
 $(\G_{m,k})^3$, there exists a point $P \in (S \cap W)(k) \cap
 \cP_k$. For this point $P$, we have the following commutative diagram:
\begin{equation*}
\xymatrix{
\Br(V_P) &
\ar^{P^{\ast}}[l] \Br(\cV\times_{\A^3_k}S) \ar@{^(->}[r] & 
\Br(V) \\
\Br(k) \ar_{\pi_P^{\ast}}[u] &
\ar^{P^{\ast}}[l] \Br(S) \ar_{\pi_{S}^{\ast}}[u] \ar@{^(->}[r]&
\Br(F) \ar_{\pi_{F}^{\ast}}[u], \\
}
\end{equation*}
and hence we can take $\pi_S^{\ast}\tilde{e'}$ as a lift of $e$. Then we
 get
\begin{equation*}
\sp(e;P) = P^{\ast}(\pi_S^{\ast}\tilde{e'}) =
 \pi_P^{\ast}P^{\ast}\tilde{e'} \in \pi_P^{\ast}\Br(k).
\end{equation*}
This means that $\sp(e;P)$ is zero in the group $\Br(V_P)/\Br(k)$, which
 contradicts that $\sp(e;P)$ is a generator of $\Br(V_P)/\Br(k) \cong
 \Z/3\Z$. Therefore we see that there is no such element $e$, and
 complete the proof of Corollary \ref{cor:nonexistance_of_generator}.
\end{proof}

\begin{funding}
This work was supported by the Global COE Program ``The research and training center
 for new development in mathematics'' at Graduate School of Mathematical
 Sciences, The University of Tokyo.
\end{funding}
\begin{ack} 
The author is grateful to Professor Shuji Saito for his encouragement,
for reading a draft version of this paper and for refining the statement
of the main theorem. He thanks Professor Jean-Louis Colliot-Th\'el\`ene
for answering his questions, for giving valuable ideas, and for pointing out
some mistakes. He is also indebted to the referee for carefully reading
and for giving valuable comments.

The final publication of this article is available at Oxford University
 Press (Quarterly Journal of Mathematics), via
\url{http://qjmath.oxfordjournals.org/content/65/2/677}.
\end{ack}

{\scriptsize
\textsc{Tetsuya Uematsu}\\
\textsc{Graduate School of Mathematical Sciences, The University of Tokyo}\\
\textsc{3-8-1 Komaba Meguro-ku Tokyo 153-8914, JAPAN}\\
\textit{e-mail address}: tetsuya1@ms.u-tokyo.ac.jp\\
Current Address:\\
\textsc{Department of General Education, Toyota National College of
Technology}\\
\textsc{2-1 Eisei-cho Toyota Aichi 471-8525, JAPAN}\\
\textit{e-mail address}: utetsuya@08.alumni.u-tokyo.ac.jp}

\begin{thebibliography}{AAAA0}
\bibitem{artin1972some}
 Artin, M., Mumford, D.: Some elementary examples of unirational
	      varieties which are not rational.\ Proc.\ London.\ Math.\
	      Soc.\ (3){\bf 25} (1972), 75--95. 
\bibitem{colliot1995arithmetique}
 Colliot-Th\'el\`ene, J.-L.; L'arithm\'etique du groupe de Chow des
	    z\'ero-cycles. J. Th\'eor. Numbres Bordeaux (1){\bf 7}
	    (1995), 51--73.
\bibitem{colliot1987arithmetique}
 Colliot-Th\'el\`ene, J.-L., Kanevsky, D., Sansuc, J.-J.:
	      Arithm\'etique des surfaces cubiques diagonales. In: {\it Diophantine
	      approximation and transcendence theory(Bonn, 1985)},
		Lecture Notes in Math., {\bf 1290}, Springer, Berlin,
		1987.
\bibitem{colliot2012groupe}
 Colliot-Th\'el\`ene, J.-L., Wittenberg, O.: Groupe de Brauer et points
	     entiers de deux familles de surfaces cubiques affines. to
	     appear in American Journal of Mathematics.
\bibitem{fujiwara2002proof}
 Fujiwara, K.: A proof of the absolute purity conjecture (after
	      Gabber). In: {\it Algebraic geometry 2000, Azumino},
	     Adv.\ Stud.\ Pure Math., {\bf 36}, Math.\
	     Soc.\ Japan, Tokyo, 2002, pp.153--183.
\bibitem{garibaldi2003cohomological}
 Garibaldi, S., Merkurjev, A., Serre, J-P.: {\it Cohomological
	     invariants in Galois cohomology}. University Lecture Series,
	     {\bf 28}, American Mathematical Society, Providence, RI,
	     2003.
\bibitem{grothendieck1967ega4_4}
Grothendieck, A.: {\it\'El\'ements de g\'eom\'etrie
		alg\'ebrique. IV. \'Etude locale des sch\'emas et des
		morphismes de sch\'emas IV}. Inst. Hautes \'Etudes
		Sci. Publ. Math., {\bf 32} 1967.
\bibitem{grothendieck1968brauer3}
Grothendieck, A.: Le groupe de Brauer I, II, III. Exemples et
	   compl\'ements. In: {\it Dix Expos\'es sur la Cohomologie des
	   Sch\'emas}, North-Holland, Amsterdam, 1968, pp.46-188.
\bibitem{kresch2008effectivity}
 Kresch, A., Tschinkel, Y.: Effectivity of Brauer-Manin
	    obstructions. Adv.\ Math.\ (1){\bf 218} (2008), 1--27.
\bibitem{lichtenbaum1969duality}
 Lichtenbaum, S.: Duality theorems for curves over p-adic
	   fields. Invent. Math., {\bf 7} 1969, 120--136.
\bibitem{manin1971groupe}
Manin, Yu.\ I.: Le groupe de Brauer-Grothendieck en g\'eom\'etrie diophantinne.
In: {\it Actes du Congr\`es International des Math\'ematiciens,
	   Nice, 1970}, (1), Gauthier-Villars, Paris, 1971, pp.401--411.
\bibitem{manin1986cubic}
Manin, Yu.\ I.: {\it Cubic forms: algebra, geometry,
	    arithmetic}. (2nd.\ ed.) Translated from Russian by
	    Hazewinkel, M.\ (North-Holland Mathematical Library,
	    {\bf 4}), North-Holland Publishing Co., Amsterdam, 1986. 
\bibitem{merkurjev1982}
Merkurjev, A.\ S., Suslin, A.\ A.: K-cohomology of Severi-Brauer varieties
	    and the norm residue homomorphism. Izv.\ Akad.\ Nauk
	    SSSR Ser.\ Mat. 5({\bf 46}) (1982), 1011--1046, 1135--1136.
\bibitem{milne1980etale}
 Milne, J.\ S.: {\it Etale cohomology}. (Princeton Mathematical Series,
	    {\bf 33}), Princeton University Press, Princeton, N.\ J., 1980.
\bibitem{saito2009zerocycle}
Saito, S., Sato, K.: Zero-cycles on varieties over $p$-adic fields and
	      Brauer groups. Ann.~Sci.~\'Ec.~Norm.~Super. (4) {\bf
	47}(3), (2014), 505--537. 
\bibitem{serre1968corps}
Serre, J.-P.: {\it Local fields}. Translated from the French by Greenberg, M.\
		J.\ (Graduate Texts in Mathematics,
	   {\bf 67}), Springer-Verlag, New York-Berlin, 1979.
\bibitem{skorobogatov2001}
Skorobogatov, A.: {\it Torsors and rational points}. (Cambridge Tracts in
Mathematics, {\bf 144}), Cambridge University Press, Cambridge, 2001.
\end{thebibliography}
\end{document}